\documentclass[11pt]{article}
\usepackage{amsfonts}
\usepackage{amsmath,amssymb,amsthm,latexsym}
\usepackage{amscd}
\usepackage[all]{xy}
\usepackage{hyperref}
\usepackage{mathrsfs}
\usepackage{fancyhdr}
\usepackage{indentfirst}
\usepackage{subcaption}
\usepackage{titlesec}
\usepackage{color}
\usepackage{tabularx}
\usepackage{multirow}
\usepackage{listings}
\usepackage{cite}
\usepackage{float}
\usepackage[all]{xy}
\usepackage{graphicx}
\usepackage{lineno}
\usepackage{marvosym}
\usepackage{geometry}
\usepackage{authblk}
\newtheorem{theorem}{Theorem}[section]
\newtheorem{proposition}[theorem]{Proposition}
\newtheorem{corollary}[theorem]{Corollary}
\newtheorem{lemma}[theorem]{Lemma}
\newtheorem{problem}{Problem}
\newtheorem{conjecture}{Conjecture}

\theoremstyle{definition}

\newtheorem{rem}[theorem]{Remark}

\newcommand{\R}{\mathbb R}
\newcommand{\C}{\mathbb C}
\newcommand{\CP}{\mathbb{CP}}
\newcommand{\RP}{\mathbb{RP}}
\newcommand{\Sym}{\operatorname{Sym}}
\newcommand{\tr}{\operatorname{tr}}
\newcommand{\Id}{\operatorname{Id}}
\newcommand{\Area}{\operatorname{Area}}
\newcommand{\Vol}{\operatorname{Vol}}

\newcommand{\CL}{\mathrm{CL}}
\newcommand{\dd}{\mathrm d}
\begin{document}
\title{A proof of the Willmore-type conjecture in $\mathbb{C}P^2$
}
\author{Peng Wang\textsuperscript{1},\quad Zhenxiao Xie 
\textsuperscript{\Letter~2},\quad Chen Zhao\textsuperscript{3}}

\date{}
 \maketitle
\footnotetext[1]{School of Mathematics and Statistics, Key Laboratory of Analytical Mathematics and Applications (Ministry of Education), FJKLAMA, Fujian Normal University, 350117 Fuzhou, P.R. China. Email: {pengwang@fjnu.edu.cn}}
\footnotetext[2]{School of Mathematical Sciences, Beihang University,  Beijing 100191, P. R. China. %
Email: {xiezhenxiao@buaa.edu.cn}}
\footnotetext[3] {
School of Mathematical Sciences, Beihang University,  Beijing 100191, P. R. China. 
Email: {sc2zhaochen@hotmail.com}}

\vspace{-0.25 cm}
\begin{abstract}
    In 2002, Montiel and Urbano conjectured that the Clifford torus in \(\mathbb{C}P^2\) minimizes the Willmore-type functional
\[
\mathcal{W}^-=\int_{T^2}(2+|H|^2)\,dA,
\]
either among all tori or among all Lagrangian tori. In this paper, we confirm this conjecture in the Lagrangian setting and disprove it in the general setting. We establish that every oriented closed Lagrangian surface of genus \(g\geq 1\) in \(\mathbb{C}P^2\) has \(\mathcal{W}^-\)-energy no less than that of the Clifford torus. Moreover, we construct non-Lagrangian deformations of the Clifford torus along which \(\mathcal{W}^-\) strictly decreases.
\end{abstract}
\vspace{0.5em}
\indent{\bf Keywords:} Willmore-type functional; Lagrangian surfaces; Clifford torus in $\mathbb{C}P^2$; Chern-Lashof inequlity

\noindent{\bf MSC(2020):\hspace{2mm} 53C42, 53D12, 43Q10}
\section{Introduction}
For an immersed closed surface $F:\Sigma\rightarrow M^n$ in a Riemannian manifold, it is well known that 
$$\mathcal{W}(F) = \frac{1}{2} \int_{\Sigma} |\mathring{\mathrm{II}}|^2 \,\dd A + 2\pi\chi(\Sigma)$$
is a fundamental global conformal invariant; that is, it is preserved under conformal changes of the ambient metric.  Here $\mathring{\mathrm{II}}$ denotes the trace-free part of the second fundamental form, and $\chi(\Sigma)$ is the Euler characteristic of $\Sigma$. This functional, referred to as the conformally invariant Willmore functional \cite{Hu-Li, Michelat-Mondino, Mondino-Riviere}, will henceforth be called simply the {\em Willmore functional}.

When the ambient space is real space forms, since the seminal works of Blaschke and Thomsen \cite{Blaschke}, the Willmore functional has been the subject of extensive investigation (see \cite{Bryant, L-Y, Kuwert, KS, Minicozzi, M-R, Marques-Neves2, Riviere} and references therein). Most of this research was dedicated to resolving the famous Willmore conjecture, first proposed by Willmore \cite{Willmore}, which posited that the Clifford torus in $\mathbb{S}^3$ minimizes the Willmore functional among all tori in $\mathbb{S}^3$. This conjecture was ultimately proven by Marques and Neves \cite{Marques-Neves}. 

When the ambient space is a generic Riemannian manifold, the Willmore functional has also been investigated from various perspectives (see \cite{Pedit-Willmore,MU,Hu-Li,Ikoma-Malchiodi-Mondino,Michelat-Mondino,Mondino-Riviere} and the references therein). In particular, it exhibits notable connections with other fundamental quantities, among them the renormalized area functional within the AdS/CFT correspondence \cite{Alexakis-Mazzeo,Graham-Witten}. Due to the presence of ambient curvature terms, the minimizing problem for the Willmore functional is considerably more challenging. Nevertheless, in $4$-dimensional oriented Riemannian manifolds, the geometry is rich enough that the Willmore functional of closed orientable surfaces exhibits a number of interesting properties.
In such ambient spaces, motivated by twistor geometry, Montiel and Urbano showed in \cite{MU} that $\mathcal{W}$ decomposes as the average of two other conformally invariant functionals $\mathcal{W}^+$ and $\mathcal{W}^-$.

In that paper, Montiel and Urbano focused on the complex projective plane $\mathbb{C}P^2$, endowed with its Fubini-Study metric $g_{\mathrm{FS}}$ of holomorphic sectional curvature $4$, complex structure $J$, and Kähler form $\omega$. For an immersed closed orientable surface $F:\Sigma\to\mathbb{C}P^2$, they proved that

\[
\mathcal{W}(F)=\int_{\Sigma}(1+3C^2+|H|^2)\,\dd A,
\]
\[
\mathcal{W}^+(F)=\int_{\Sigma}(6C^2+|H|^2)\,\dd A,\qquad
\mathcal{W}^-(F)=\int_{\Sigma}(2+|H|^2)\,\dd A=2\mathcal{A}(F)+\int_{\Sigma}|H|^2\,\dd A,
\]
where $H=\frac{1}{2}\operatorname{tr}\mathrm{II}$ denotes the mean curvature vector, $C$ is the Kähler function of $F$, defined by $F^*\omega=C\,dA$, and $\mathcal{A}$ is the area functional.
They showed that  $\mathcal{W}^-$ behaves as the Willmore functional of closed surfaces in the round sphere $\mathbb{S}^n$. For example,  minimal surfaces in $\mathbb{C}P^2$ are critical points of $\mathcal{W}^-$. They also obtained the following lower bound for $\mathcal{W}^-$, analogous to the Li–Yau bound  \cite{L-Y} for the Willmore functional in $\mathbb{S}^n$. 
\begin{theorem}[Montiel-Urbano \cite{MU}] \label{thm-MU}Let $F:\Sigma \rightarrow \mathbb{C}P^2$ be an immersed closed surface. Then 
    $$\mathcal{W}^-(F)\geq 2\pi\,\mu(F),$$ 
    where $\mu(F)$ denotes the maximum multiplicity of $F$.  
If $F$ is further assumed to be  Lagrangian, then 
$$\mathcal{W}^-(F)\geq 4\pi\,\mu(F).$$
\end{theorem}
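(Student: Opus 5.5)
We argue as in Li--Yau, but we lift to the sphere $\mathbb{S}^5\subset\C^3$ through the Hopf fibration $\pi:\mathbb{S}^5\to\CP^2$ and replace conformal transformations by the projective action of $SL(3,\C)$. The key observation is that $\mathcal{W}^-$ is the Willmore-type energy of an associated surface in a round sphere. Concretely, take the Veronese-type (first standard) embedding $\Phi:\CP^2\to \mathbb{S}^7\subset \mathfrak{su}(3)\cong\R^8$, $\Phi([z])=c\,(z\bar z^{t}-\tfrac13\Id)$, normalized to be isometric. $\CP^2$ is then a minimal submanifold of $\mathbb{S}^7(r)$ with $r^2=2/3$, since the first eigenvalue of the Laplacian on $(\CP^2,g_{FS})$ is $12$. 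For $F:\Sigma\to\CP^2$, the mean curvature of $\Phi\circ F$ in $\R^8$ is $H+\tfrac12\,\mathrm{tr}_\Sigma \sigma$, where $\sigma$ is the second fundamental form of $\Phi(\CP^2)\subset\R^8$. A pointwise computation, using the fact that $\sigma$ is parallel with $\langle\sigma(X,X),\sigma(Y,Y)\rangle$ determined by the holomorphic and totally-real sectional curvatures, should give
\[
|H_{\R^8}|^2=|H|^2+a+b\,C^2
\]
for constants $a,b$.

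The first thing to check is whether $a+bC^2$ actually reproduces the integrand $2$ in $\mathcal{W}^-$. If it does not, the constant term has to be supplied another way. My plan is to avoid this issue by working with conformal maps of the form $\Phi_A=\Phi\circ A$, $A\in SL(3,\C)$, followed by a Li--Yau style averaging.

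In that averaging I would use the test functions given by the coordinates $x_i=\Phi^i\circ F$. These satisfy $\sum x_i^2=r^2$ and $\Delta_\Sigma \Phi = 2H_{\R^8}$. The Li--Yau argument in $\mathbb{S}^n$ controls $\int(|H_{\R^8}|^2-r^{-2})$, hence
\[
\int_\Sigma |H_{\R^8}|^2\,dA=\mathcal{A}(F)\,r^{-2}+\int_\Sigma |H^{\mathbb S^7}|^2\,dA.
\]
After simplification this should equal $\int_\Sigma(2+|H|^2+\beta(1-C^2))\,dA$ for some constant $\beta$. For Lagrangian surfaces $C\equiv0$; in general $0\le 1-C^2\le1$.

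Next I would use multiplicity, following Montiel--Urbano's own proof. Let $p$ be a point of multiplicity $\mu$, and compose with the holomorphic projective dilations $A_t$ fixing $p$ and pushing all of $\CP^2$ except the antipodal line toward $p$. Then $\mathcal{A}(A_t\circ F)\to \mu\cdot\mathcal{A}(\text{limit})$. The key input is the monotonicity-type inequality
\[
\mathcal{W}^-(F)\ \ge\ \limsup_{t\to\infty} 2\mathcal{A}(A_t\circ F).
\]
To prove it, I would use that $\mathcal{W}^-$ is invariant, or at least non-increasing, under the flow generated by these holomorphic conformal-type fields. This is where the twistor interpretation helps, since $\mathcal{W}^-$ is the energy of the twistor lift into the nonholomorphic twistor space. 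Finally, each sheet through $p$ converges to a totally geodesic surface through $p$. For a general immersion the limit is a complex line $\CP^1$ of area $\pi$, which gives $2\pi\mu$. For a Lagrangian immersion the limit is a totally real $\RP^2$, whose orientable double cover has area $2\pi$, which gives $4\pi\mu$.

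The main obstacle is the monotonicity step: showing that $\mathcal{W}^-$ bounds twice the area of every surface in the $SL(3,\C)$-orbit of $F$. I would attempt it by differentiating $2\mathcal{A}(A_t\circ F)$ along the flow of the gradient field $V=\nabla f$, where $f=|\langle z,p\rangle|^2$ is a first eigenfunction. I would then integrate by parts using $\Delta_\Sigma f$ and $H$, and apply Cauchy--Schwarz to bound the derivative by $\int(|H|^2+2)-2\mathcal{A}$, in the same way as the Li--Yau proof in $\mathbb{S}^n$.
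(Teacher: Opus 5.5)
The paper does not prove this statement; it quotes it from Montiel--Urbano, so there is no proof to compare routes with. Judged on its own, your proposal has a genuine gap: its central step is false. The key inequality $\mathcal{W}^-(F)\ge\limsup_t 2\mathcal{A}(A_t\circ F)$ cannot come from invariance. On a compact Einstein manifold other than the round sphere every conformal vector field is Killing, so elements of $PGL(3,\C)$ outside $PU(3)$ are not conformal for $g_{\mathrm{FS}}$ and do not preserve $\mathcal{W}^-$. Worse, a M\"obius dilation of $\mathbb{S}^n$ collapses the rest of the surface to a point, but the projective dilation centred at $p$ collapses it onto the complex line $\ell_\infty$ at distance $\pi/2$ from $p$. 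That part of the surface therefore keeps its area.

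Concretely, in the affine chart $w\mapsto[1:w]$ centred at $p$, your $A_t$ is $w\mapsto tw$. Let $S_\epsilon$ be the Euclidean round sphere of radius $\epsilon$ in $\C\times\R\subset\C^2$ centred at $(\epsilon,0)$. Then $\mathcal{W}^-(S_\epsilon)=4\pi+O(\epsilon^2)$, while $A_t\circ S_\epsilon=S_{t\epsilon}$. As $t\to\infty$:
\begin{itemize}
\item the sheet through $p$ fills out the totally geodesic Lagrangian $\RP^2=\mathbb{P}(\operatorname{span}_\R\{e_0,ie_1,e_2\})$, of area $2\pi$;
\item the remainder converges to the projection from $p$ onto $\ell_\infty$, which covers $\ell_\infty$ once, giving area $\pi$.
\end{itemize}
Hence $\liminf_t 2\mathcal{A}(A_t\circ S_\epsilon)\ge 6\pi>\mathcal{W}^-(S_\epsilon)$ for small $\epsilon$. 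So $\mathcal{W}^-$ increases along your flow, and no integration by parts or Cauchy--Schwarz estimate can rescue the step. In any case, your sketched derivative bound only involves the geometry of $A_s\circ F$ at time $s$, so it never compares with $\mathcal{W}^-(F)$.

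The limit analysis also needs correcting. The dilation must expand at $p$, not push everything toward $p$. The blow-up of a sheet with tangent plane $T_i$ is $\mathbb{P}(\R e_0\oplus T_i)$. This is a complex line only when $T_i$ is complex, and in general it is not totally geodesic and has area between $\pi$ and $2\pi$. The Lagrangian limit $\RP^2$ has area $2\pi$; it is its double cover that has area $4\pi$.

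The part you abandoned is the salvageable one, and it proves the first inequality outright. With the isometric normalization $\Phi=\tfrac{1}{\sqrt2}(zz^*-\tfrac13\Id)$, $|z|=1$, the image lies in a sphere with $r^2=\dim/\lambda_1=4/12=1/3$, not $2/3$. The second fundamental form satisfies $\langle\sigma(X,Y),\sigma(Z,W)\rangle=2\langle X,Y\rangle\langle Z,W\rangle+\langle X,Z\rangle\langle Y,W\rangle+\langle X,W\rangle\langle Y,Z\rangle+\langle JX,Z\rangle\langle JY,W\rangle+\langle JX,W\rangle\langle JY,Z\rangle$. This gives $|H_{\R^8}|^2=|H|^2+3+C^2$. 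Two checks: a complex line goes to a round sphere of radius $\tfrac12$, with $|H_{\R^8}|^2=4$; the totally geodesic $\RP^2$ goes to the Veronese surface, with $|H_{\R^8}|^2=3$. So no expression of the form $2+|H|^2+\beta(1-C^2)$ arises.

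The Euclidean Li--Yau inequality then gives $\int_\Sigma(|H|^2+3+C^2)\,dA\ge4\pi\mu(F)$. Since $3+C^2\le4$, this yields $\mathcal{W}^-(F)\ge\tfrac12\int_\Sigma(|H|^2+4)\,dA\ge2\pi\mu(F)$. For Lagrangian $F$, however, the same computation yields only $\mathcal{W}^-(F)+\mathcal{A}(F)\ge4\pi\mu(F)$, which falls short by the area term. For the totally geodesic $\RP^2$ it reads $6\pi\ge4\pi$, whereas the claimed Lagrangian bound is an equality there. The Lagrangian inequality therefore needs an additional ingredient that uses the Lagrangian condition on the whole surface, and your proposal does not supply one.
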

Inspired by the classical Willmore conjecture in $\mathbb{S}^3$, Montiel and Urbano proposed the following Willmore-type conjecture for the Clifford torus 
$$T_{\CL} \triangleq \left\{ [z_1, z_2, z_3] \in \mathbb{C}P^2 \;\middle|\; |z_1| = |z_2| = |z_3| \right\},
$$ 
which is minimal and Lagrangian. For this torus, 
$$ \mathcal{A}(T_{\CL})=\frac{4\pi^2}{3\sqrt3},\qquad
 \mathcal{W}^-(T_{\CL})=2\mathcal{A}(T_{\CL}),\qquad \mathcal{W}(T_{\CL})=\mathcal{A}(T_{\CL}).$$
\vskip 0.2cm
\noindent
\textbf{Conjecture} (Montiel-Urbano \cite{MU}).  \textit{The Clifford torus $T_{\CL}$ achieves the minimum of the functional $\mathcal{W}^-$ either amongst all tori in $\mathbb{C}P^2$, or amongst all Lagrangian tori in $\mathbb{C}P^2$.}
\vskip 0.2cm

In the survey paper \cite{Marques-Neves2}, Marques and Neves mentioned the above 
Willmore-type conjecture in $\mathbb{C}P^2$ and pointed out that if answered positively, the conjecture would help in finding the nontrivial Special Lagrangian cone in $\mathbb{C}^3$ with least possible density. 
This conjecture has been verified on certain families of Hamiltonian-minimal Lagrangian tori, see \cite{Ma-Mironov-Zuo} and \cite{Kazhymurat}. Recently, C. P. Wang and the second author reinterpreted $\mathcal{W}^+$ and $\mathcal{W}^-$ from the perspective of conformal geometry \cite{WangXie}. Moreover, in that paper, they proved that the Clifford torus $T_{\CL}$ is strictly Willmore-stable, and asserted that it is also stable for the functional $\mathcal{W}^-$ in Remark~6.4. 


In this paper, we completely solve Montiel-Urbano's conjecture in two different directions. For the Lagrangian setting, we affirm it by applying the Chern-Lashof    inequality to the total absolute curvature of the Hopf bundle of Lagrangian surfaces. For the general setting, we disprove the conjecture by constructing counterexamples, obtained by deforming the Clifford torus along certain non-Killing Jacobi fields.

To be concrete, in the Lagrangian setting, instead of studying Lagrangian surfaces in $\mathbb{C}P^2$ directly, we consider the $3$-dimensional submanifolds in $\mathbb{S}^5\subset\mathbb{R}^6$ constructed from their Hopf bundles. In conformal geometry of submanifolds, such a construction has been used to construct Wintgen ideal submanifolds from complex curves in $\mathbb{C}P^2$; see \cite{DDVV,LMWX}. For the $3$-dimensional submanifolds constructed here, the squared length of the trace-free part of the second fundamental form is closely related to the integrand of $\mathcal{W}^-$. By further establishing a determinant integral inequality (see Theorem~\ref{thm:matrix}), we also relate this squared length to the total absolute curvature of the $3$-dimensional submanifold. The Chern-Lashof inequality then allows us to obtain the following proof of Montiel-Urbano's conjecture in the Lagrangian setting. 
\newtheorem*{newth}{Main Theorem}
\begin{newth}
{\em Let $F:\Sigma \rightarrow \mathbb{C}P^2$ be a closed orientable Lagrangian surfaces of genus $g\geq 1$. Then, 
$$\mathcal{W}^-(F)\geq \frac{8\pi^2}{3\sqrt{3}}=\mathcal W^-(T_{\CL}),\qquad\quad\mathcal{W}(F)\geq \frac{4\pi^2}{3\sqrt{3}}=\mathcal W(T_{\CL})$$
with equality attained if and only if $F$ is congruent to the Clifford torus in $\mathbb{C}P^2$. }
\end{newth}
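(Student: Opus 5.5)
The plan is to lift the problem to $\mathbb{S}^5$. Given a Lagrangian immersion $F:\Sigma\to\mathbb{C}P^2$, note that the horizontal lift is only local (the Maslov class obstructs a global one), but the full Hopf preimage is global. So set $M^3=\pi^{-1}(F(\Sigma))$, parametrized by $\tilde F:\tilde\Sigma\times S^1\to\mathbb{S}^5\subset\mathbb{R}^6$, $(p,\theta)\mapsto e^{i\theta}\psi(p)$, where $\psi$ is a local Legendrian lift; this is a well-defined immersed closed $3$-manifold. Because $F$ is Lagrangian, $M$ is a \emph{minimal-direction-adapted} Hopf cylinder: the Hopf fiber $i\psi$ is tangent, and the normal bundle of $M$ in $\mathbb{S}^5$ is spanned by $J$ applied to the horizontal tangent plane together with the lifted normal data. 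I would compute the second fundamental form of $M\subset\mathbb{S}^5$ in an adapted frame $\{e_1,e_2,e_3=i\psi\}$. The horizontal $2\times2$ blocks reproduce $\mathrm{II}$ of $F$, and the mixed entries $\langle \mathrm{II}(e_a,e_3),\nu\rangle$ are constant of size $1$ coming from $J$. I would then check that $|\mathring{\mathrm{II}}_M|^2$ integrated over the fiber equals a constant times $(2+|H|^2)$ minus a divergence/Gauss–Bonnet term. Hence $\mathcal W^-(F)$ controls $\int_M|\mathring{\mathrm{II}}_M|^2$ (up to the factor $2\pi$ of fiber length).

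Next I would pass to total absolute curvature of $M$ viewed in $\mathbb{R}^6$ (the position vector adds an umbilic normal). The Lipschitz–Killing curvature is $G(x,\nu)=\det(A_\nu)$ for unit normals $\nu$ in $\mathbb{R}^6$, and $\mathrm{TA}(M)=\frac{1}{\mathrm{vol}(\mathbb{S}^5)}\int_{UN M}|\det A_\nu|$. The key new ingredient is the determinant integral inequality Theorem~\ref{thm:matrix}. As I read it, it bounds $\int_{\nu\in S^2}|\det(\nu_0 I+\sum\nu_k A_k)|$ above by an explicit function of $\sum|A_k|^2$ (trace-free parts) that is concave/monotone and sharp at the Clifford configuration. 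Combining this with the fiber integration gives $\mathrm{TA}(M)\le \Phi(\mathcal W^-(F))$ for an explicit increasing $\Phi$, with $\Phi(\mathcal W^-(T_{\CL}))$ equal to the relevant Betti-number bound. For genus $g\ge1$, $M$ is a circle bundle over a torus or higher genus surface, so $\sum b_i(M;\mathbb{Z}_2)\ge 4$; for the torus, $M$ is a $3$-torus-like manifold with $\sum b_i\ge 8$. Chern–Lashof then gives $\mathrm{TA}(M)\ge \sum b_i$, and inverting $\Phi$ yields $\mathcal W^-(F)\ge 8\pi^2/(3\sqrt3)$. The bound on $\mathcal W$ follows since $C=0$ for Lagrangians, so $\mathcal W=\mathcal W^- -\mathcal A$. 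I would get it from a second linear combination in the same argument, or more likely from the same inequality with area separated out, using $\int|H|^2\ge0$ and the weighting in $\Phi$.

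For equality, all inequalities must be sharp. Chern–Lashof equality means $M$ is tight, hence $M\subset\mathbb{S}^5$ is a tight taut product $S^1\times S^1\times S^1$. Equality in Theorem~\ref{thm:matrix} forces the Clifford shape operators. Therefore $M=\{|z_1|=|z_2|=|z_3|\}$ and $F=T_{\CL}$ up to congruence.

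The main obstacle is the middle step: getting a pointwise-after-fiber-integration inequality between $|\det A_\nu|$ averaged over normals and $2+|H|^2$. It must be exactly sharp at the Clifford torus, and it has to handle the Betti number count correctly (the $3$-torus value $8$ versus the value for higher genus). I would first try to write $A_\nu$ in the adapted frame as a $3\times3$ symmetric matrix with fixed off-diagonal unit entries coupling $e_3$. Then I would integrate $|\det|$ over $S^2$ explicitly via Cauchy–Schwarz against $\int\det^2$, which is a polynomial in the entries computable in closed form.
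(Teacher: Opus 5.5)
Your plan for the torus follows the paper's architecture: the Hopf lift to $\mathbb{S}^5\subset\mathbb{R}^6$, Chern--Lashof, and a determinant integral bound. But the higher-genus case and the sharp pointwise step fail as you describe them.

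\textbf{Genus $g\ge2$.} You run Chern--Lashof for every $g\ge1$, but that route cannot reach the sharp constant once $g\ge2$.

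The best pointwise estimate available is the paper's $\mathcal I\le\frac{\sqrt3}{2}\bigl(|\mathring A_1|^2+|\mathring A_2|^2\bigr)=2\sqrt3\bigl(K+3|\phi|^2-\tfrac1{12}h^2\bigr)$. Combine it with $\mathcal W^-(F)=4\int_\Sigma|\phi|^2\,dA+4\pi\chi(\Sigma)$. Since $F^*\omega=0$, the bundle $F^*\mathbb{S}^5$ is trivial. So even with the full count $\sum_j b_j(\Sigma_g\times S^1)=4+4g$ (rather than your $4$), you only get $\mathcal W^-(F)\ge\frac{4(1+g)\pi^2}{3\sqrt3}-\frac{8\pi(g-1)}{3}$. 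This is about $14.4<\frac{8\pi^2}{3\sqrt3}\approx15.2$ already at $g=2$, and it decreases in $g$, because the Gauss--Bonnet term enters with the wrong sign.

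The missing idea is the paper's separate topological argument:
\begin{itemize}
\item Since $\int_\Sigma F^*\omega=0$, the class $[\Sigma]$ vanishes in $H_2(\mathbb{C}P^2;\mathbb{R})$, so its self-intersection number is $0$.
\item $J$ identifies $N\Sigma$ with $T\Sigma$, so the normal Euler number is $-\chi(\Sigma)\neq0$.
\item Hence $F$ is not an embedding and $\mu(F)\ge2$. Theorem~\ref{thm-MU} then gives $\mathcal W^-\ge8\pi>\frac{8\pi^2}{3\sqrt3}$, and $\mathcal W\ge\frac12\mathcal W^-\ge4\pi>\frac{4\pi^2}{3\sqrt3}$.
\end{itemize}

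Even for $g=1$ the count $8$ needs proof, since a nontrivial circle bundle over $T^2$ has real Betti sum $6$. You must show the Euler class of $F^*\mathbb{S}^5$ vanishes. This follows from $F^*\omega=0$ and the torsion-freeness of $H^2(T^2;\mathbb Z)$. Note also that the closed $3$-manifold to use is this pull-back bundle, not $\pi^{-1}(F(\Sigma))$ or $\tilde\Sigma\times S^1$.

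\textbf{The sharp pointwise step.} Your fallback, Cauchy--Schwarz against $\int_{\mathbb{S}^2}\det^2$, cannot work. Every inequality in the chain is an equality for $T_{\CL}$, including Chern--Lashof. So any pointwise loss at the Clifford configuration pushes the final constant strictly below $\mathcal W^-(T_{\CL})$.

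At that configuration, after normalizing $\mathring A_1,\mathring A_2$ to an orthonormal pair, the integrand is $\sqrt3\,|x_1x_2x_3|$ in a suitable basis. This is far from constant: $\int_{\mathbb{S}^2}\sqrt3|x_1x_2x_3|\,dS=\sqrt3$, whereas $\bigl(4\pi\int_{\mathbb{S}^2}3x_1^2x_2^2x_3^2\,dS\bigr)^{1/2}=4\pi/\sqrt{35}\approx2.12$.

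What is actually needed has three pieces:
\begin{itemize}
\item The column identity $t\Id-uA_1-vA_2=[x,A_1x,A_2x]$ with $x=(t,-u,-v)^{\mathsf T}$. It uses the total symmetry of $C_{ijk}$ and kills the trace parts.
\item Theorem~\ref{thm:matrix}: $\int_{\mathbb{S}^2}|\det[x,Px,Qx]|\,dS\le\sqrt3\,|P\wedge Q|$ for $P,Q\in\Sym_0(3)$.
\item AM--GM together with $|\mathring A_1|^2+|\mathring A_2|^2=4K+12|\phi|^2-\frac13h^2$.
\end{itemize}
Your paraphrase of Theorem~\ref{thm:matrix}, as bounding $\int|\det(\nu_0\Id+\sum\nu_kA_k)|$ by trace-free data, is false for general symmetric $A_k$: take $A_1=c\Id$, $A_2=0$. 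The theorem applies to the lift only through the column identity. If you do not take it as given, its proof is the real content: the area-with-multiplicity interpretation of $x\mapsto(x^{\mathsf T}Px,x^{\mathsf T}Qx)$, the spectral formula $2\mathcal D=\mathcal A_{\max}+\mathcal A_{\min}-\mathcal A_{\mathrm{mid}}$, and the trigonometric inequality.

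Two smaller points. First, your fibre identity is not exact: in fact $|\mathring A_1|^2+|\mathring A_2|^2=3(2+h^2)-2K-\frac13h^2$, and the extra term merely has a favourable sign. Second, for equality, ``tight implies a product of circles'' is not an available classification, and pointwise Clifford shape operators still need a rigidity input. The paper argues that equality forces $H=0$ and $[A_1,A_2]=0$. Since $[A_1,A_2]$ has off-diagonal entries $\pm K$, this gives $K=0$, and the Ludden--Okumura--Yano classification of flat minimal Lagrangian surfaces finishes.
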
 

For the  general setting, by analyzing the second variation of $\mathcal{W}^-$ along all smooth variations, we show that the space of non-Killing Jacobi fields of the Clifford torus has dimension $6$. Using these Jacobi fields, we construct a $3$-parameter family of deformations of the Clifford torus that decrease the functional $\mathcal{W}^-$. This disproves Montiel-Urbano's conjecture in the non-Lagrangian setting. \vspace{3mm}

This paper is organized as below. In Section~\ref{sec:hopf}, we study the $3$-dimensional submanifolds in $\mathbb{S}^5\subset\mathbb{R}^6$ constructed from the Hopf bundles of Lagrangian surfaces in $\mathbb{C}P^2$. In Section~\ref{sec-proof}, 
we first interpret the required determinant integral as the area with multiplicity of a quadratic map, thereby reducing the problem to a trigonometric integral inequality. Based on this inequality, we finish the proof of the Main Theorem. In Section~\ref{sec:counterexamples}, we construct counterexamples to disprove Montiel-Urbano's conjecture in the non-Lagrangian setting. Finally, we proposed two open problems in Section~\ref{sec-5}.

\section{The Hopf bundle over Lagrangian surfaces in $\mathbb{C}P^2$}\label{sec:hopf}
Let $\pi:\mathbb{S}^5\to\CP^2$ be the Hopf fibration. For an immersion $F:\Sigma\to\CP^2$, the pull-back of the Hopf fibration 
\[
 \widehat\Sigma=F^*\mathbb{S}^5
   =\{(p,z)\in \Sigma \times \mathbb{S}^5:F(p)=\pi(z)\}
\]
yields a 3-dimensional manifold, and the map $\widehat F (p,z)=z:\widehat\Sigma\to \mathbb{S}^5\subset\R^6$ defines an immersion. 


Now, we assume that $F$ is Lagrangian. At a base point choose an oriented orthonormal frame $e_1,e_2$ such
that $H=hJe_1$, where $h=|H|\geq0$. When $H=0$, any such frame can
be used. Consider the  cubic tensor 
\[
 C_{ijk}=\langle \mathrm{II}(e_i,e_j),Je_k\rangle,
\]
which is totally symmetric under the Lagrangian condition. 
It follows that 
$$C_{111}+C_{122}=C_{111}+C_{221}=2h,~~~~~~C_{112}+C_{222}=0,$$
and we can parameterize the curvature ellipse of $F$ as follows, 
$$\mathrm{II}(\cos\theta \,e_1+\sin\theta\, e_2, \cos\theta \,e_1+\sin\theta\, e_2)=h Je_1+(\cos 2\theta, \sin 2\theta)\begin{pmatrix}
    C_{111}-h&C_{112}\\
    C_{112}&C_{122}
\end{pmatrix}\begin{pmatrix}
    Je_1\\
    Je_2
\end{pmatrix}.$$
Inspired by this, we write 
\begin{equation}\label{eq:cubic}
 C_{111}=a+\tfrac32h,\quad C_{112}=b,\quad
 C_{122}=-a+\tfrac12h,\quad C_{222}=-b,
\end{equation}
where $a,b$ are two locally defined functions on $\Sigma$. They are related to the locally defined conformal invariants $\phi$ and $\psi$ in \cite{WangXie} as follows, 
$$\phi=-a+i b,~~~\psi=-\frac{h}{2}.$$ 
We point out that for Lagrangian surfaces, the Willmore-type functional $\mathcal{W}^-$ can be reformulated, in terms of the local conformal invariant $\phi$, as
\begin{equation}\label{eq-W-}
\mathcal{W}^-(F)=4\int_{\Sigma}|\phi|^2+4\pi \,\chi(\Sigma), 
\end{equation}
see Remark~2.11 in \cite{WangXie}. 

Note that the ambient sectional curvature of a Lagrangian plane is $1$. Thus, by the Gauss equation (for example, see (22) in \cite{WangXie}), we obtain  
\begin{equation}\label{eq:gauss}
 K=1+\tfrac12h^2-2|\phi|^2.
\end{equation}

Next, we use the O'Neill formula for the Hopf fibration to compute the shape operator of $\widehat F$. It follows that for horizontal lifts $X,Y$,
\begin{equation}\label{eq-Oneil}
\nabla^{\mathbb{S}^5}_X Y
=\widetilde{\nabla^{\CP^2}_X Y}
-\langle JX,Y\rangle\,Jz,~~~~~~\nabla^{\mathbb{S}^5}_{Jz}X=JX.
\end{equation}
where $J$ is the complex structure on $\mathbb{C}^3\cong \mathbb{R}^6$, and $Jz$ denotes the vertical unit vector. 
Let $E_1,E_2$ denote the horizontal lifts of $e_1,e_2$, and set $E_0\triangleq Jz.$
Then $\{E_0, E_1,E_2\}$ forms an orthonormal tangent frame on $\widehat\Sigma$, and 
\[
N_1\triangleq JE_1,\qquad N_2\triangleq JE_2.
\]
forms an orthonormal normal frame along $\widehat F$ in $\mathbb{S}^5$.   
A straightforward calculation using \eqref{eq-Oneil} yields the corresponding shape operators, 
\begin{equation}\label{eq:shape-matrices}
 A_1=\begin{pmatrix}
 0&1&0\\1&a+3h/2&b\\0&b&-a+h/2
 \end{pmatrix},\qquad
 A_2=\begin{pmatrix}
 0&0&1\\0&b&-a+h/2\\1&-a+h/2&-b
 \end{pmatrix}.
\end{equation}

It is easy to verify that 
\begin{equation}\label{eq-norm}
|\mathring A_1|^2+|\mathring A_2|^2=4+4|\phi|^2+\frac5{3}h^2=4K+12|\phi|^2-\frac1{3}h^2.
\end{equation} 
This enables the application of the geometric inequality of the 3-dimensional submanifold $\widehat{F}$ to estimate the Willmore-type functional $\mathcal{W}^-$. A remarkable inequality concerning the shape operator of submanifolds is the Chern-Lashof inequality of the total absolute curvature, established in \cite{CL1,CL2}. 


For a closed immersed 3-dimensional submanifold $M^3$ in $\R^6$, the total absolute curvature $\tau(M)$ is defined as 
\begin{equation}\label{eq-tauM}\tau(M^3)\triangleq \frac{1}{\Vol(\mathbb{S}^5)}
   \int_{M^3}\left(\int_{\mathbb{S}^2\subset N_pM^3}|\det A_\nu|\,\dd S \right)\,\dd V,
   \end{equation}
where $\dd S$ denotes the standard area form on $\mathbb{S}^2$, viewed as the unit $2$-sphere in the normal space $N_p M^3$ of $M^3$ at $p$, $A_\nu$ is the shape operator with respect to the normal vector $\nu\in\mathbb{S}^2$, and $\dd V$ is the volume form on $M^3$. 
The Chern-Lashof inequality states that 
\begin{equation}\label{eq:chern-lashof}
 \tau(M^3)
 \ \geq\ \sum_j b_j(M^3;\R),
\end{equation}
where $b_j(M^3; \mathbb{R})$ denotes the $j$-th Betti number of $M^3$.

Next, let us return to the Hopf bundle $\widehat{\Sigma}$ over the Lagrangian surface $F:\Sigma \to \mathbb{C}P^2$. To determine the topology of $\widehat{\Sigma}$, we need to compute the Euler class of this circle bundle, which belongs to $H^2(\Sigma, \mathbb{Z})$. Let us now assume that $\Sigma = T^2$. 
Consider the pull-back of the Euler class of the Hopf bundle $\pi : S^5 \to \mathbb{C}P^2$. For this principal $S^1$-bundle, the Euler class naturally identifies with the first Chern class of the associated line bundle, which serves as the standard generator of $H^2(\mathbb{C}P^2, \mathbb{Z})$. In de Rham cohomology, this Chern class is represented by the Kähler form $\omega$ (up to a normalization constant). Consequently, the Lagrangian condition on $F$ implies that the pull-back yields $F^*\omega = 0$ in $H^2(T^2, \mathbb{R}).$ Since $H^2(T^2;\mathbb Z)$ is torsion-free, we derive that the pull-back of the Euler class of $\pi : S^5 \to \mathbb{C}P^2$ vanishes in $H^2(T^2,\mathbb{Z})$. Classification of circle bundles therefore gives
\begin{equation}\label{eq:hopf-topology}
 \widehat\Sigma\cong T^3,\qquad
 \sum_{j=0}^3 b_j(\widehat\Sigma;\R)=8.
\end{equation}

Now we regard $\widehat F:\widehat \Sigma\rightarrow \mathbb{S}^5\subset \mathbb{R}^6$ as an immersed submanifold in $\mathbb{R}^6$. Given a point $p\in \widehat \Sigma$, the unit $2$-sphere $\mathbb{S}^2$ on the normal space $N_p \widehat \Sigma$ can be parameterized as 
$$\{t z+ u N_1+vN_2\mid t^2+u^2+v^2=1\}.$$
With respect to the normal vector $t z+u N_1+v N_2$, the shape operator of $\widehat{F}$ is given by
$-t\Id+uA_1+vA_2$, where $A_1$ and $A_2$ take the form \eqref{eq:shape-matrices}. 
Set 
\begin{equation}\label{eq:I-def}
 \mathcal I(p)\triangleq
 \int_{\mathbb{S}^2}
    |\det(t\Id-uA_1-vA_2)|\,\dd S.
\end{equation}
Note that $\mathcal{I}(p)$ is constant along the Hopf fibers of $\widehat{\Sigma}$ and hence depends only on the base point $q=\pi(p)$. 
So the total absolute curvature of $\widehat{F}:\widehat{\Sigma}\to\mathbb{R}^6$ defined in \eqref{eq-tauM} can be reformulated as
\[
\tau(\widehat{\Sigma})=\frac{2\pi}{\pi^3}\int_{T^2}\mathcal{I}(q)\,dA,
\]
where we have used the facts that 
each fiber has length $2\pi$, and $\operatorname{Vol}(\mathbb{S}^5) = \pi^3$.  
Combining this with \eqref{eq:chern-lashof} and \eqref{eq:hopf-topology}, we obtain 
\begin{equation}\label{eq:I-lower}
 \int_{T^2}\mathcal I(q)\,d A\geq4\pi^2.
\end{equation}

Next, we attempt to derive an upper bound for $\mathcal{I}(q)$ in terms of $|\mathring A_1|^2+|\mathring A_2|^2$
, from which a lower bound of the functional $\mathcal{W}^-$ naturally arises by \eqref{eq-norm} and \eqref{eq:I-lower}. 

Note that the first columns of $\Id$, $A_1$, and $A_2$ yield the standard orthonormal basis of $\mathbb{R}^3$, and the third column of $A_1$ coincides with the second column of $A_2$. Using these relations, it is straightforward to verify that
\begin{equation}\label{eq:column-identity}
  t\Id-uA_1-vA_2=[x,A_1x,A_2x],
\end{equation}
where $x=(t,-u,-v)^{\mathsf T}$. 
Consequently, we obtain  
\begin{equation}\label{eq:I-determinant}
 \mathcal I(q)=\int_{\mathbb{S}^2}|\det[x,A_1x,A_2x]|\,\dd S=\int_{\mathbb{S}^2}|\det[x,\mathring A_1x,\mathring A_2x]|\,\dd S,
\end{equation}
where $\mathring A_1=A_1-\frac{2h}{3}\Id$ and $\mathring A_2=A_2$ have been used. This form of $\mathcal{I}(p)$ enables us to establish an upper bound in the next section.

\section{Proof of Montiel-Urbano's conjecture in the Lagrangian setting}\label{sec-proof}


Let $\mathrm{Sym}_0(3)$ denote the space of traceless symmetric $3\times 3$ real matrices, endowed with the Frobenius inner product $\langle P, Q \rangle = \operatorname{tr}(PQ)$. In this section, we first prove the following integral inequality for matrices in $\mathrm{Sym}_0(3)$. 
\begin{theorem}
\label{thm:matrix}
For every pair $P,Q\in\mathrm{Sym}_0(3)$, we have
\begin{equation}\label{eq:matrix-inequality}
 \int_{\mathbb{S}^2}\bigl|\det[x,Px,Qx]\bigr|\,dS
 \leq \sqrt{3}\,|P\wedge Q|,
\end{equation}
where
\[
|P\wedge Q|\triangleq \sqrt{\operatorname{tr}(P^2)\operatorname{tr}(Q^2)-\operatorname{tr}(PQ)^2}.
\]
Equality holds if and only if $[P,Q]=0$.
\end{theorem}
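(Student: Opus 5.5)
The plan is to reduce the inequality to a statement about the area of a convex planar set, in the spirit of the remark in the introduction that the determinant integral is the area, with multiplicity, of a quadratic map.

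\textbf{Step 1: the integrand is a Jacobian.} Consider the quadratic map
\[
\Phi:\mathbb{S}^2\to\R^2,\qquad \Phi(x)=\tfrac12\bigl(x^{\mathsf T}Px,\;x^{\mathsf T}Qx\bigr).
\]
The gradients of its components restricted to $\mathbb{S}^2$ are the tangential parts of $Px$ and $Qx$. Hence its area Jacobian at $x$ is
\[
|x\cdot(Px\times Qx)|=|\det[x,Px,Qx]|.
\]
By the area formula, the left-hand side of \eqref{eq:matrix-inequality} equals $\int_{\R^2}\#\Phi^{-1}(y)\,dy$.

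\textbf{Step 2: normalization.} Both sides are invariant under $(P,Q)\mapsto(RPR^{\mathsf T},RQR^{\mathsf T})$ for $R\in SO(3)$. Both sides also scale by $|\det L|$ under $(P,Q)\mapsto(P,Q)L$ for $L\in GL(2,\R)$:
\begin{itemize}
\item on the left, $\det[x,Px,Qx]$ is bilinear and alternating in $(P,Q)$;
\item on the right, $|P\wedge Q|$ is the area form of the Gram matrix.
\end{itemize}
The condition $[P,Q]=0$ is preserved under both operations. So we may assume $P,Q$ orthonormal in $\mathrm{Sym}_0(3)$, and must show the integral is at most $\sqrt3$. If $P,Q$ are dependent, both sides vanish; these pairs commute, so they are consistent with the equality statement.

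\textbf{Step 3: multiplicity bound.} For a regular value $y$, the preimage $\Phi^{-1}(y)$ consists of the unit vectors with
\[
x^{\mathsf T}(P-2y_1\Id)x=0,\qquad x^{\mathsf T}(Q-2y_2\Id)x=0,
\]
using $|x|=1$ to homogenize. This is the intersection of two conics in $\RP^2$, so it has at most $4$ points, i.e.\ at most $8$ points on $\mathbb{S}^2$. Therefore
\[
\int_{\mathbb{S}^2}|\det[x,Px,Qx]|\,dS\le 8\,\Area(W),\qquad W=\Phi(\mathbb{S}^2).
\]
The set $2W$ is the real joint numerical range of $(P,Q)$, i.e.\ the linear projection of the spectraplex $\{X\ge0,\ \tr X=1\}$ onto the plane $\mathrm{span}(P,Q)$. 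Hence $W$ is convex, with support function $h(\theta)=\tfrac12\lambda_{\max}(\cos\theta\,P+\sin\theta\,Q)$. It remains to show
\[
\Area(W)=\tfrac12\int_0^{2\pi}\bigl(h^2-h'^2\bigr)\,d\theta\le \frac{\sqrt3}{8}.
\]

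\textbf{Step 4: the trigonometric inequality (main obstacle).} A unit traceless symmetric $3\times3$ matrix $M$ has eigenvalues $\sqrt{2/3}\cos(\alpha+2\pi k/3)$, with $\cos3\alpha=3\sqrt6\det M$. Thus $h(\theta)=\tfrac12\sqrt{2/3}\cos\alpha(\theta)$, where $\alpha(\theta)\in[0,\pi/3]$ and $\cos 3\alpha(\theta)=3\sqrt6\,\det(\cos\theta P+\sin\theta Q)$, a cubic trigonometric polynomial in $\theta$.

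\emph{Commuting case.} After simultaneous diagonalization, $\mathrm{span}(P,Q)$ is the diagonal traceless plane. Then $W$ is an equilateral triangle, the image of the simplex $\{(x_1^2,x_2^2,x_3^2)\}$, and every interior point has exactly $8$ preimages. A direct computation should give $8\,\Area(W)=\sqrt3$.

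\emph{General case.} I would write $3\sqrt6\det(\cos\theta P+\sin\theta Q)=A\cos3\theta+B\sin3\theta+C\cos\theta+D\sin\theta$. Orthonormality constrains $A^2+B^2+C^2+D^2$ (the average of $\det^2$ over $\theta$ is fixed by the invariants of the pencil). The task is then to show that the area functional is maximized exactly when $C=D=0$ and $A^2+B^2=1$, which is the triangle case.

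The first thing I would try is an isoperimetric-type argument for $h=c\cos\alpha$:
\begin{itemize}
\item $\int(h^2-h'^2)$ is maximized when $\alpha$ is as small as possible in an averaged sense, subject to $\alpha'$ being controlled;
\item at the same time, the triangle's support function saturates the pointwise constraint $\alpha\le\pi/3$.
\end{itemize}
If that fails, I would parametrize $\mathrm{span}(P,Q)$ up to $SO(3)$ by a small number of angles and verify the inequality explicitly.

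\textbf{Step 5: equality.} Equality forces both estimates to be sharp:
\begin{itemize}
\item the multiplicity must be $8$ almost everywhere on $W$;
\item $W$ must be the extremal triangle.
\end{itemize}
The second condition forces $\mathrm{span}(P,Q)$ to be a Cartan subalgebra of diagonal matrices, so $[P,Q]=0$. Conversely, commuting pairs attain equality by the computation in the commuting case.
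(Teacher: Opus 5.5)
\textbf{Where the proposal fails.} Your Steps 1--3 are correct: the area formula, the B\'ezout bound of $8$ preimages, and convexity of $W$ with support function $\tfrac12\lambda_{\max}(C_\theta)$, where $C_\theta=\cos\theta\,P+\sin\theta\,Q$. The problem is Step 4. The inequality you reduce to there, $\Area(W)\le\sqrt3/8$, is false, so the argument cannot be completed along these lines. Take the orthonormal pair
\[
P=\frac{1}{\sqrt2}\begin{pmatrix}0&1&0\\1&0&0\\0&0&0\end{pmatrix},\qquad
Q=\frac{1}{\sqrt2}\begin{pmatrix}0&0&1\\0&0&0\\1&0&0\end{pmatrix}.
\]
Then $\Phi(x)=\tfrac{1}{\sqrt2}\,x_1(x_2,x_3)$, so $W$ is the closed disk of radius $\tfrac{1}{2\sqrt2}$. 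Equivalently, $\det C_\theta\equiv0$ and $\lambda_{\max}\equiv1/\sqrt2$. Hence $\Area(W)=\pi/8>\sqrt3/8$.

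The theorem itself is not violated. On $\mathbb{S}^2$ one has $\det[x,Px,Qx]=\tfrac12x_1(2x_1^2-1)$, the integral equals $\pi/2<\sqrt3$, and a generic point of $W$ has only $4$ preimages. So the commuting pair is extremal not because its image is large (its triangle is smaller than this disk). It is extremal because the multiplicity is $8$ on all of $W$. The bound $\#\Phi^{-1}(y)\le 8$ discards exactly the information that makes the theorem true, and the ``isoperimetric'' heuristic of Step 4 points the wrong way. Relatedly, $A^2+B^2+C^2+D^2$ is not fixed by orthonormality: it is $1$ for a commuting pair and $0$ here.

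\textbf{What is missing, and how the paper supplies it.} You need an exact accounting of where the multiplicity is $8$ rather than $4$. The fold set $\{\det[x,Px,Qx]=0\}$ consists of eigenvectors of $C_\theta$. Its image has the outer branch $\partial W$ (from $\lambda_{\max}$ and $\lambda_{\min}$). It also has an inner, generally cusped branch $\theta\mapsto\lambda_{\mathrm{mid}}n_\theta+\lambda_{\mathrm{mid}}'n_\theta^\perp$ coming from the middle eigenvalue, across which the multiplicity jumps. The paper applies Stokes' theorem on $\{\det[x,Px,Qx]>0\}$, with the middle branch oppositely oriented, and obtains
\[
2\int_{\mathbb{S}^2}\bigl|\det[x,Px,Qx]\bigr|\,dS=\mathcal A_{\max}+\mathcal A_{\min}-\mathcal A_{\mathrm{mid}},\qquad \mathcal A_j=\tfrac12\int_0^{2\pi}\bigl(\lambda_j^2-(\lambda_j')^2\bigr)\,d\theta,
\]
where $\mathcal A_{\max}=\mathcal A_{\min}=4\Area(W)$ in your normalization. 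Your estimate amounts to replacing $-\mathcal A_{\mathrm{mid}}$ by $2\mathcal A_{\max}$. The true inequality is instead a trade-off between the two terms: $-\mathcal A_{\mathrm{mid}}=0$ for the disk above, while $-\mathcal A_{\mathrm{mid}}=2\mathcal A_{\max}=\sqrt3$ for a commuting pair.

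The paper resolves this trade-off by writing all three eigenvalues through the single function $\psi=\tfrac13\arcsin\bigl(3\sqrt6\det C_\theta\bigr)$. It then proves the resulting integral is at most $2\sqrt3$ using two ingredients:
\begin{itemize}
\item a uniform concavity estimate along the $\{\cos\theta,\sin\theta\}$-component of $f=3\sqrt6\det C_\theta$;
\item monotonicity in the amplitude of the $\cos3\theta$-component of $f$.
\end{itemize}
Equality holds only for $f=\cos(3\theta-\theta_0)$. Your Step 5 inherits the same defect, since it relies on the triangle maximizing $\Area(W)$.
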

To prove it, in Subsection~\ref{sub-3.1} we interpret the determinant integral as the area with multiplicity of a quadratic map, thereby reducing the problem to a trigonometric integral inequality, whose proof is given in Subsection~\ref{sub-3.2}. Finally, in Subsection~\ref{sub-3.3}, applying the integral inequality \eqref{eq:matrix-inequality} to \eqref{eq:I-determinant} and invoking the arithmetic--geometric mean inequality yields an upper bound for $\mathcal{I}(p)$ in terms of $|\mathring{A}_1|^2+|\mathring{A}_2|^2$. Consequently, Montiel-Urbano's conjecture in the Lagrangian setting follows from \eqref{eq-norm} and \eqref{eq:I-lower}.

Set 
$$\mathcal D(P,Q)\triangleq \int_{\mathbb{S}^2}\bigl|\det[x,Px,Qx]\bigr|\,dS.$$
Note that for any linearly independent pair $\{P,Q\}$, one can choose an orthonormal basis $\{\widehat{P},\widehat{Q}\}$ of $\mathrm{Span}\{P,Q\}$ and write $P=a_1\widehat{P}+b_1\widehat{Q}$ and $Q=a_2\widehat{P}+b_2\widehat{Q}$. Then
\begin{equation}\label{eq:matrix-gram-schmidt}
 \det[x,Px,Qx]=(a_1b_2-a_2b_1)\det[x,\widehat{P}x,\widehat{Q}x].
\end{equation}
Since
\[
P\wedge Q=(a_1b_2-a_2b_1)\widehat{P}\wedge\widehat{Q}
\quad\text{and}\quad
|\widehat{P}\wedge\widehat{Q}|=1,
\]
it suffices to prove Theorem~\ref{thm:matrix} for orthonormal pairs.
\subsection{An geometric interpretation of the determinant integral $\mathcal{D}(P,Q)$}\label{sub-3.1}
Let $P,Q\in\Sym_0(3)$ be two matrices that are orthonormal with respect to the Frobenius inner product. Consider the map
\[
 q:\mathbb{S}^2\longrightarrow\R^2,\qquad
 q(x)=(x^TPx,x^TQx).
\]
Note that for an oriented orthonormal tangent basis $v, w$ on $\mathbb{S}^2$ with $v \times w = x$, the  Jacobian of $q$ is
\begin{equation}\label{eq-det}
\det \begin{pmatrix} 2v \cdot Px & 2w \cdot Px \\ 2v \cdot Qx & 2w \cdot Qx \end{pmatrix} = 4\langle v\wedge w, Px\wedge Qx\rangle=4 \det[x, Px, Qx].
\end{equation}
Set
\[
 \mathcal J(x)=\det[x,Px,Qx],\qquad
 \eta=\frac12(y_1\,d y_2-y_2\,d y_1).
\]
Then we have   
\begin{equation}\label{eq-pullback}
 q^*(d\eta)=4\mathcal J\,\dd S,
\end{equation}
where $\dd S$ is the standard area form of $\mathbb{S}^2$. 
It follows that 
\begin{equation*}
4\mathcal D(P,Q)
=4\int_{\mathbb{S}^2}\bigl|\det[x,Px,Qx]\bigr|\,\dd S=\int_{\mathbb{S}^2}\mathrm{sign}(\mathcal{J})\,q^*(d\eta),
\end{equation*}
which is the area of $q$ counted with multiplicity. Set  
$$\Omega_+\triangleq\{x\in \mathbb{S}^2:\mathcal J(x)>0\},~~~~~~
Z\triangleq\{x\in \mathbb{S}^2:\mathcal J(x)=0\}.$$ Since $\mathcal J(-x)=-\mathcal J(x)$, it follows that 
\begin{equation}\label{eq:spectral-green}
 \mathcal D(P,Q)
 =2\int_{\Omega_+}\mathcal J\,d S=\frac{1}{2}\int_{\Omega_+}q^*(d\eta).
\end{equation}
It is natural to rewrite the above integral over $\Omega_+$ as an integral over its boundary $Z$. To this end, we need to understand the geometry of $Z$. 
Note that, if $x\in Z$, by \eqref{eq-det} the tangential projections of $Px$ and $Qx$ onto
$T_x\mathbb{S}^2$ are linearly dependent. Thus,  some nonzero linear 
combination of $P$ and $Q$ has $x$ as an eigenvector, which means that there exits some $\theta\in [0, 2\pi]$ and $\lambda\in \mathbb{R}$ such that 
\begin{equation}
\label{eq-xtheta}(\cos\theta \,P+\sin\theta\,Q) x=\lambda x.
\end{equation}
Conversely, 
consider the pencil associated to $P$ and $Q$, 
\[
 C_\theta\triangleq \cos\theta \,P+\sin\theta\,Q,
 \qquad 0\leq\theta\leq2\pi,
\]
and denote its ordered eigenvalues by
$\lambda_{\min}(\theta)\leq\lambda_{\mathrm{mid}}(\theta)\leq\lambda_{\max}(\theta)$. Then it follows from 
\begin{equation}\label{eq-detctheta}
\mathcal J(x)=\det[x,Px,Qx]=\det[x,C_\theta\, x,C_\theta'\, x]
\end{equation}
that the six unit eigenvectors of $C_\theta$ all lie on $Z$. 
\begin{lemma} \label{lem-Lipsch}
Let $P,Q\in\mathrm{Sym}_0(3)$ be two matrices that are orthonormal with respect to the Frobenius inner product. 
For each $j \in \{\min, \mathrm{mid}, \max\}$, the function $\lambda_j(\theta)$ is Lipschitz continuous in $\theta$, and hence $\lambda_j\in W^{1,2}([0,2\pi])$.   
\end{lemma}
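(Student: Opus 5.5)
We will use Weyl's perturbation inequality for ordered eigenvalues of real symmetric matrices: if $A,B\in\mathrm{Sym}(3)$ have ordered eigenvalues $\lambda_1(A)\le\lambda_2(A)\le\lambda_3(A)$ and likewise for $B$, then
\[
|\lambda_k(A)-\lambda_k(B)|\le\|A-B\|_{\mathrm{op}}\le |A-B|,\qquad k=1,2,3,
\]
where $|\cdot|$ is the Frobenius norm. This follows from the Courant--Fischer min-max characterization,
\[
\lambda_k(A)=\min_{\dim V=k}\ \max_{x\in V,\,|x|=1}x^TAx .
\]
Indeed, for every unit vector $x$ we have $x^TAx\le x^TBx+\|A-B\|_{\mathrm{op}}$. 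Taking the max over $V$ and then the min over $V$ gives $\lambda_k(A)\le\lambda_k(B)+\|A-B\|_{\mathrm{op}}$, and swapping $A$ and $B$ gives the reverse bound.

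We apply this with $A=C_\theta$ and $B=C_{\theta'}$. Since $C_\theta=\cos\theta\,P+\sin\theta\,Q$ and $P,Q$ are Frobenius-orthonormal,
\[
|C_\theta-C_{\theta'}|^2=(\cos\theta-\cos\theta')^2+(\sin\theta-\sin\theta')^2=4\sin^2\tfrac{\theta-\theta'}{2}\le(\theta-\theta')^2 .
\]
Hence each of $\lambda_{\min},\lambda_{\mathrm{mid}},\lambda_{\max}$ is $1$-Lipschitz on $[0,2\pi]$. Since $C_{\theta+2\pi}=C_\theta$, the estimate holds as a $2\pi$-periodic function, even across $\theta=0$.

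Finally, a Lipschitz function on a compact interval is absolutely continuous. By Rademacher's theorem (or the one-dimensional fact for absolutely continuous functions), it is differentiable almost everywhere, its derivative satisfies $|\lambda_j'|\le1$, and it is the integral of that derivative. Thus $\lambda_j\in L^\infty$ and $\lambda_j'\in L^\infty\subset L^2([0,2\pi])$, so $\lambda_j\in W^{1,\infty}\subset W^{1,2}([0,2\pi])$.

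There is no genuine obstacle here. The only point needing care is that the eigenvalues are taken in \emph{ordered} form, so eigenvalue crossings could destroy smoothness but not the Lipschitz bound. The min-max argument is exactly what handles this uniformly, with no need to track individual eigenvalue branches.
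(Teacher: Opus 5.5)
Your proof is correct and follows essentially the same approach as the paper: Weyl's perturbation inequality applied to the pencil $C_\theta$, with $\|C_{\theta_1}-C_{\theta_2}\|\le|\theta_1-\theta_2|$ from Frobenius orthonormality, then Rademacher's theorem to get $\lambda_j'\in L^\infty\subset L^2$. The only differences are cosmetic: you derive Weyl's inequality from Courant--Fischer, and you compute the chord length $2\bigl|\sin\tfrac{\theta-\theta'}{2}\bigr|$ directly, whereas the paper cites Weyl's inequality and uses $\|C_\theta'\|=1$.
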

\begin{proof}
    By assumption, we have 
\[
    \|C_\theta'\|^2 = \|{-\sin\theta\,P + \cos\theta\,Q}\|^2 = \sin^2\theta \|P\|^2 + \cos^2\theta \|Q\|^2 = 1,
\]
which implies 
\[
    \|C_{\theta_1} - C_{\theta_2}\| \leq |\theta_1 - \theta_2|, \quad \forall \theta_1, \theta_2 \in [0, 2\pi].
\]
Then, it follows from Weyl's perturbation inequality \cite{horn2012matrix} that the ordered eigenvalue $\lambda_j$ satisfies 
\[
    |\lambda_j(\theta_1) - \lambda_j(\theta_2)| \leq \|C_{\theta_1} - C_{\theta_2}\|_2 \leq \|C_{\theta_1} - C_{\theta_2}\| \leq |\theta_1 - \theta_2|, 
\]
where $\|~\|_2$ is the spectral norm of matrices. 
This shows that $\lambda_j(\theta)$ is a Lipschitz continuous function on $[0, 2\pi]$. 

Consequently, $\lambda_j(\theta)$ is absolutely continuous. By Rademacher's theorem, its classical derivative $\lambda_j'(\theta)$ exists almost everywhere and is essentially bounded, implying $\lambda_j' \in L^\infty([0, 2\pi])\subset L^2([0, 2\pi])$. This completes the proof. 
\end{proof}

\begin{proposition}
\label{lem:spectral}
Under the same conditions as in Lemma~\ref{lem-Lipsch}, 
define 
\begin{equation}\label{eq:spectral-areas}
 \mathcal A_j\triangleq \frac12\int_0^{2\pi}
       \bigl(\lambda_j^2-(\lambda_j')^2\bigr)\,\dd\theta,
 \qquad j\in\{\min,\mathrm{mid},\max\},
\end{equation}
Then we have 
\begin{equation}\label{eq:spectral-area-identity}
 2\mathcal D(P,Q)
 =\mathcal A_{\max}+\mathcal A_{\min}-\mathcal A_{\mathrm{mid}}.
\end{equation}
Moreover, $\mathcal A_{\max}=\mathcal A_{\min}$. 
\end{proposition}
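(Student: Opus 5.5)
The plan is to identify the image of $q$ and the images of the three eigenvector branches, and then evaluate \eqref{eq:spectral-green} by Stokes' theorem. The identity $\mathcal A_{\max}=\mathcal A_{\min}$ is the easy part. Since $C_{\theta+\pi}=-C_\theta$, we have $\lambda_{\min}(\theta)=-\lambda_{\max}(\theta+\pi)$. Hence $\lambda_{\min}^2=\lambda_{\max}(\cdot+\pi)^2$ and $(\lambda_{\min}')^2=(\lambda_{\max}')^2(\cdot+\pi)$ almost everywhere. Shifting the variable of integration over the full period then gives $\mathcal A_{\min}=\mathcal A_{\max}$; Lemma~\ref{lem-Lipsch} guarantees that all integrals make sense. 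The same symmetry gives $\lambda_{\mathrm{mid}}(\theta+\pi)=-\lambda_{\mathrm{mid}}(\theta)$.

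\textbf{Step 1: the envelope curves.} Write $e_\theta=(\cos\theta,\sin\theta)$, so that $q(x)\cdot e_\theta=x^TC_\theta x$. If $x=v_j(\theta)$ is a unit eigenvector of $C_\theta$ for $\lambda_j(\theta)$ (on an interval where $\lambda_j$ is simple), then $q(v_j)\cdot e_\theta=\lambda_j$. Differentiating and using $C_\theta v_j=\lambda_j v_j$ gives $q(v_j)\cdot e_\theta'=v_j^TC_\theta'v_j=\lambda_j'$. Thus $q$ maps each eigenvector branch onto the envelope curve
\[
\gamma_j(\theta)=\lambda_j(\theta)e_\theta+\lambda_j'(\theta)e_\theta',
\]
which is the curve with ``support function'' $\lambda_j$. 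The classical formula gives $\frac12\int_0^{2\pi}\det(\gamma_j,\gamma_j')\,d\theta=\frac12\int_0^{2\pi}(\lambda_j^2-(\lambda_j')^2)\,d\theta=\mathcal A_j$. This is first checked for smooth $\lambda_j$ by integrating by parts the term $\lambda_j\lambda_j''$, and then extended to $W^{1,2}$ by density. So $\mathcal A_j=\int_0^{2\pi}\gamma_j^*\eta$ is the signed area enclosed by $\gamma_j$.

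\textbf{Step 2: Stokes on $\Omega_+$.} By \eqref{eq-detctheta}, $Z$ is exactly the set of eigenvectors of the pencil, namely $Z=\bigcup_{\theta,j}\{\pm v_j(\theta)\}$. As $\theta$ runs over $[0,2\pi]$, these sweep out closed curves on $\mathbb S^2$. The relation $C_{\theta+\pi}=-C_\theta$ identifies the $\max$ branch over $[\pi,2\pi]$ with the $\min$ branch over $[0,\pi]$, and the middle branch with itself. Then \eqref{eq:spectral-green} and Stokes give
\[
\mathcal D=\tfrac12\int_{\partial\Omega_+}q^*\eta .
\]
The task is to show that $\partial\Omega_+$, with its induced orientation, is mapped by $q$ onto $\gamma_{\max}$ and $\gamma_{\min}$ traversed positively and $\gamma_{\mathrm{mid}}$ traversed negatively, each exactly once. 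That would yield $2\mathcal D=\mathcal A_{\max}+\mathcal A_{\min}-\mathcal A_{\mathrm{mid}}$.

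To fix the signs, I would compute $\mathcal J$ just off the eigenvector curve. Expanding $\det[x,C_\theta x,C_\theta'x]$ at $x=v_j+\epsilon w$, with $w$ orthogonal to $v_j$, the sign of $\mathcal J$ on either side is governed by the sign of $\lambda_k-\lambda_j$ for the other eigenvalues $\lambda_k$. This sign pattern is definite for $j=\max,\min$ and indefinite for $j=\mathrm{mid}$, which is the source of the minus sign.

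An alternative, and cleaner, check uses multiplicities. Let $N(y)$ be the number of preimages of $y$ in $\Omega_+$. Then $2\mathcal D=\int N(y)\,dy$. One determines $N$ from the winding numbers of the $\gamma_j$ about $y$. By Dines' convexity theorem, $q(\mathbb S^2)$ is the convex set whose support function is $\lambda_{\max}$, and its boundary is $\gamma_{\max}$. The middle curve bounds a region, with signed winding, where the count drops.

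\textbf{Step 3: degenerate pencils.} Where eigenvalues coincide, $Z$ can have crossings and the branches $v_j$ are not smooth. I would first prove the identity for generic orthonormal pairs $(P,Q)$, whose pencil has only transversal, isolated degeneracies, and then pass to the limit. The left side $\mathcal D$ is continuous in $(P,Q)$. The right side is also continuous, because Weyl's inequality, as in Lemma~\ref{lem-Lipsch}, gives uniform Lipschitz bounds, and the $\lambda_j$ converge in $W^{1,2}$.

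I expect the orientation and multiplicity bookkeeping in Step 2 to be the main obstacle. It requires checking that each piece of $Z$ is counted once with the correct sign, in particular near points where $\lambda_{\mathrm{mid}}$ meets $\lambda_{\max}$ or $\lambda_{\min}$.
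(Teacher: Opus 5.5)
Your strategy is the paper's: Stokes on $\Omega_+$, the envelope curves $\gamma_j=\lambda_je_\theta+\lambda_j'e_\theta'$ with support function $\lambda_j$, and the orientation sign $\operatorname{sign}\prod_{k\neq j}(\lambda_k-\lambda_j)$. The paper makes that sign precise as $d\mathcal J_x(v)=\det L_j\,\langle x,x_j'\wedge v\rangle$ with $L_j=(C_\theta-\lambda_j\Id)|_{T_{x_j}\mathbb S^2}$. You also reduce to generic pairs by density, and your count ``each $\gamma_j$ once'' is correct.

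\textbf{The gap is the continuity claim in Step~3.} Weyl's inequality gives uniform Lipschitz bounds. That yields uniform convergence of $\lambda_j^{(k)}$, but only weak-$*$ convergence of $(\lambda_j^{(k)})'$. Under weak convergence $\int(\lambda_j')^2$ is merely lower semicontinuous, so each $\mathcal A_j$ is only upper semicontinuous. Since the identity combines the $\mathcal A_j$ with both signs, nothing passes to the limit. The assertion ``the $\lambda_j$ converge in $W^{1,2}$'' is exactly what must be proved. The missing argument has three parts:
\begin{itemize}
\item The limiting pencil has only finitely many degenerate angles. They are the zeros of $1-f^2$ with $f=3\sqrt6\det C_\theta$, a trigonometric polynomial that cannot satisfy $|f|\equiv1$ because $f(\theta+\pi)=-f(\theta)$.
\item At every other angle the eigenprojections $\Pi_j^{(k)}$ converge, so $(\lambda_j^{(k)})'=\langle\Pi_j^{(k)},(C_\theta^{(k)})'\rangle\to\lambda_j'$ pointwise.
\item The bound $|(\lambda_j^{(k)})'|\le\|(C_\theta^{(k)})'\|=1$ then allows dominated convergence.
\end{itemize}
This is not a formality, because the identity is needed for all pairs, including non-generic ones such as commuting pairs.

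\textbf{Second, your generic class is misjudged, which is why Step~2 looks harder than it is.} Eigenvalue coincidence for real symmetric matrices has codimension two. The degenerate unit matrices in $\Sym_0(3)$ form two copies of $\RP^2$ inside $\mathbb S^4$. Pairs whose pencil meets them are therefore the image of a $6$-dimensional family in the $7$-dimensional Stiefel manifold. A generic pencil thus has simple spectrum for every $\theta$, and there are no crossings to analyze. At a degenerate angle a whole great circle of eigenvectors lies in $Z$, so your description of $Z$ as the union of the branches $\pm v_j(\theta)$ would in fact fail there.

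You must also impose the generic condition that $P,Q$ have no common eigenvector; the bad set here is $5$-dimensional. This condition does three things:
\begin{itemize}
\item It guarantees $\operatorname{proj}_{x_j^\perp}(C_\theta'x_j)\neq0$, so your first-order expansion of $\mathcal J$ across a branch is nonzero.
\item Consequently $0$ is a regular value of $\mathcal J$.
\item Each point of $Z$ lies on exactly one branch, since $\theta$ is unique mod $\pi$.
\end{itemize}
Under both conditions each $\lambda_j$ is real-analytic, so Step~1 needs no density extension. That extension would not make sense in any case, since $\gamma_j'$ involves $\lambda_j''$.
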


\begin{proof}
We first assume that $C_\theta$ has simple spectrum for every
$\theta$ and that $P,Q$ have no common eigenvector. 
\vskip 0.2cm
{\bf Claim.} {\em Under this assumption, $Z$ is a smooth curve in $\mathbb{S}^2$.}
\vskip 0.2cm


Given a point $x\in Z$, since $P$ and $Q$ share no common eigenvector, 
the parameter $\theta$ determined in \eqref{eq-xtheta} is unique  modulo $\pi$. 
Differentiating \eqref{eq-detctheta}, we obtain that for every $v\in T_x \mathbb{S}^2$, 
\begin{align}\label{eq-dJ}
 d\mathcal J_x(v)
 =\det[x,(C_\theta-\lambda \Id)v,C_\theta'\,x].
\end{align}
By assumption, $\lambda$ is a simple eigenvalue and $C_\theta' x\nparallel x$. It follows that 
$$(C_\theta-\lambda \Id) T_x \mathbb{S}^2= T_x \mathbb{S}^2. $$
So there exists a vector $v\in T_x \mathbb{S}^2$, such that $d\mathcal J(v)\neq 0$. This means $d\mathcal J\neq0$ on $Z$, i.e., $0$ is a regular value of $\mathcal{J}$.  Consequently, 
$Z=\partial \Omega_+$ is a smooth curve on $\mathbb{S}^2$.

It follows from 
\eqref{eq:spectral-green} that 
\begin{equation}\label{eq-ZI}
 \mathcal D(P,Q)
=\frac{1}{2}\int_{\partial\Omega_+}q^*\eta.
\end{equation}
Observe that $Z=\partial \Omega_+$ can be decomposed into three branches $\{Z_{\max}, Z_{\min}, Z_{\mathrm{mid}}\}$ according to the corresponding eigenvalues $\{\lambda_{\max}, \lambda_{\min}, \lambda_{\mathrm{mid}}\}$. 
Denote by  $x_j=x_j(\theta)$ the locally chosen 
unit eigenvector of
$C_\theta$ associated with the eigenvalue $\lambda_j$. This yields a local parameterization  of $Z_j$.  
Set
\[
 L_j\triangleq(C_\theta-\lambda_j\Id)|_{T_{x_j} \mathbb{S}^2},
 \qquad w_j\triangleq\operatorname{proj}_{x_j^\perp}(C_\theta'x_j).
\]
By assumption, the operator $L_j$ is invertible and  $w_j\neq0$. 
Differentiating the  equation $(C_\theta-\lambda_j\Id)x_j=0$ 
and projecting it onto $T_{x_j} \mathbb{S}^2$  yields 
\begin{equation}\label{eq:eigenvector-derivative}
 w_j=(C_\theta'-\lambda_j'\Id)x_j=-L_jx_j'. 
\end{equation}
Choose $v\in T_{x_j}\mathbb{S}^2$ such that $x_j'\wedge v$ points in the outward normal direction of $\mathbb{S}^2$ at $x_j$. 
It follows from \eqref{eq-dJ} that 
\begin{align}
 d\mathcal J_x(v)
 =\det[x,L_j v,w_j]=\det[x,L_j x_j',L_j v]=\langle x, L_j x_j'\wedge L_j v\rangle=\det(L_j)\,\langle x,  x_j'\wedge v\rangle.   \label{eq:critical-orientation}
\end{align}
Set $\sigma_j\triangleq\operatorname{sign}\det L_j$. By definition,  we have 
\[
 \qquad\sigma_{\max}=\sigma_{\min}=1,
 \qquad\sigma_{\mathrm{mid}}=-1.
\]
Then, \eqref{eq:critical-orientation} implies that increasing $\theta$ gives the positive boundary
orientation for $Z_{\max}$ and $Z_{\min}$ 
and
the opposite orientation for $Z_\mathrm{mid}$. An illustrative example is shown in Figure~\ref{fig:both}. In the left picture, the blue, red, and green curves denote \(Z_{\mathrm{mid}}\), \(Z_{\max}\), and \(Z_{\min}\), respectively, and the brown domain denotes \(\Omega_+\).
\begin{figure}
    \centering
    \begin{subfigure}{0.43\linewidth}
        \centering
        \includegraphics[width=\linewidth]{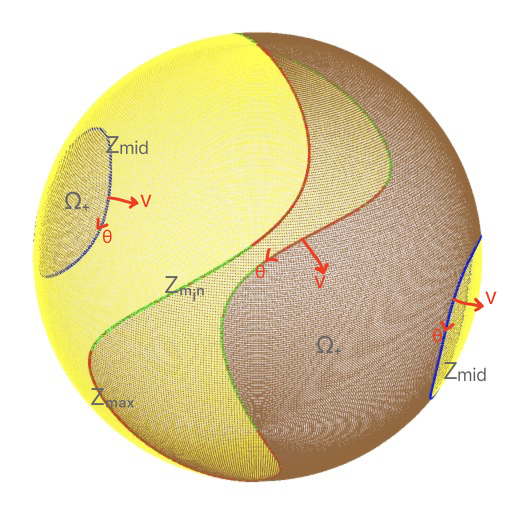}
        \caption{The domain $\Omega_+$ and its boundary $Z=\partial\Omega_+$}
        \label{fig:critical}
    \end{subfigure}
    \hfill
    \begin{subfigure}{0.43\linewidth}
        \centering
        \includegraphics[width=\linewidth]{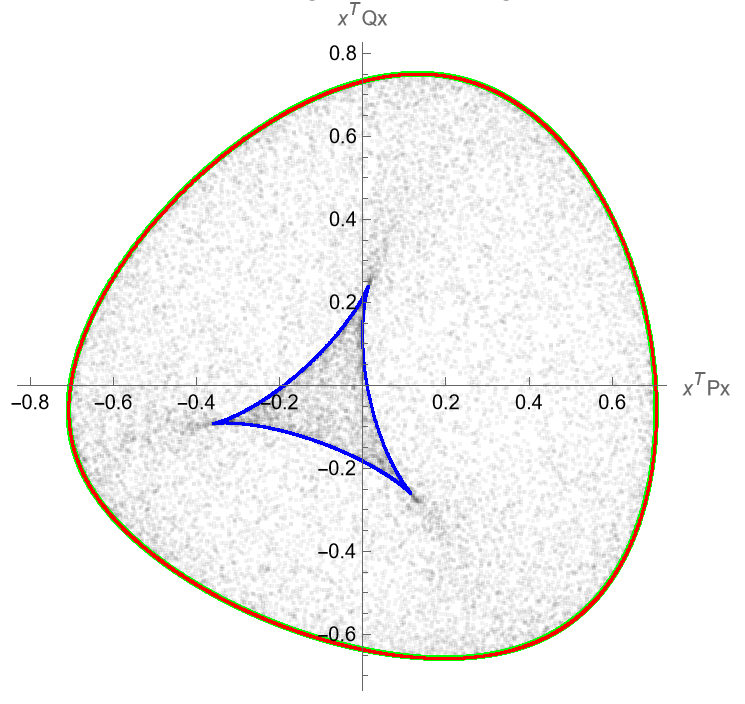}
        \caption{The image curve $q(Z)$}
        \label{fig:imagecurve}
    \end{subfigure}
    \caption{An example given by $P=
\frac{1}{\sqrt{2}}\begin{pmatrix}
 -1 & 0 & 0 \\
 0 & 0 & 0 \\
 0 & 0 & 1 \\
\end{pmatrix}
$ and 
$Q=
\frac{1}{3 \sqrt{30}}\begin{pmatrix}
-1 & 2 & 8 \\
 2 & 2 & 8 \\
 8 & 8 & -1 \\
\end{pmatrix}
$ 
    }
    \label{fig:both}
\end{figure}

Note that if we  track $\pm x_j(\theta)$ on the whole interval $[0,2\pi]$, then its  image is a twofold covering of $Z_j$ and $Z_{\max}=Z_{\min}$ as sets (since $\lambda_{\max}(\theta+\pi)=-\lambda_{\min}(\theta)$). For each $j\in\{\max,\min,\mathrm{mid}\}$, let $Z_j^+$ denote
the oriented $1$-cycle obtained by tracing both $x_j(\theta)$
and $-x_j(\theta)$ for $0\leq\theta\leq2\pi$, with the
parameterization multiplicities retained and both paths
oriented by increasing $\theta$. Consequently, we obtain,  as chains, 
$$2\partial\Omega_{+}=Z_{\max}^++Z_{\min}^+-Z_{\mathrm{mid}}^-,$$
where $\partial\Omega_{+}$ is endowed with the  positive boundary  orientation. 
Set $$\gamma_j(\theta)\triangleq q(x_j(\theta)), ~~~\theta\in [0, 2\pi], ~~~j\in\{\max, \min, \mathrm{mid}\}.$$
Then it follows from \eqref{eq-ZI} that 
\begin{equation}\label{eq-ZI2}
\begin{split}
 2\mathcal D(P,Q)
&=\int_{\partial\Omega_+}q^*\eta=\frac12\left(\int_{Z_{\max}^+}q^*\eta+\int_{Z_{\min}^+}q^*\eta-\int_{Z_{\mathrm{mid}}^+}q^*\eta\right)\\
&=\int_{\gamma_{\max}^+}\eta+\int_{\gamma_{\min}^+}\eta-\int_{\gamma_{\mathrm{mid}}^+}\eta\\
&=\int_0^{2\pi}\gamma_{\max}^*\,\eta+\int_0^{2\pi}\gamma_{\min}^*\,\eta-\int_0^{2\pi}\gamma_{\mathrm{mid}}^*\,\eta,
\end{split}
\end{equation}
where we have used $q(x_j)=q(-x_j)$ in the third equality. 
We point out that here we do not require the critical-value curves
$\gamma_j$ to be immersed. In particular, cusps of such curves
cause no difficulty.

Substituting 
\begin{equation}\label{eq-PQ}
P=\cos\theta\, C_\theta-\sin\theta\, C_\theta',\qquad  Q=\sin \theta \,C_\theta+\cos\theta\, C_\theta'
\end{equation} 
into the definition of $\gamma_j$, and applying \eqref{eq:eigenvector-derivative} yields 
\[
 \gamma_j=\lambda_jn_\theta+\lambda_j'n_\theta^\perp,
\]
where $n_\theta=(\cos\theta,\sin\theta)$ and
$n_\theta^\perp=(-\sin\theta,\cos\theta)$. 
Integration by parts then gives
\begin{equation}\label{eq:spectral-envelope-integral}
 \begin{split}
 \int_0^{2\pi}\gamma_j^*\eta
 =\frac12\int_0^{2\pi}
       \lambda_j(\lambda_j+\lambda_j'')\,d\theta=\frac12\int_0^{2\pi}
       \bigl(\lambda_j^2-(\lambda_j')^2\bigr)\,d\theta
 =\mathcal A_j.
 \end{split}
\end{equation}
Equations~\eqref{eq-ZI2} and  \eqref{eq:spectral-envelope-integral}
prove \eqref{eq:spectral-area-identity} under the temporary
assumptions. Moreover, 
$\lambda_{\max}(\theta+\pi)=-\lambda_{\min}(\theta)$ implies
$\mathcal A_{\max}=\mathcal A_{\min}$ directly.

To complete the proof for every orthonormal pair $\{P, Q\}$, we show  that the orthonormal pairs which share no common eigenvector and  whose pencil $C_\theta=\cos \theta \,P+\sin\theta\,Q$ has no repeated eigenvalue for every $\theta\in [0,2\pi]$ are dense in the space of ordered
orthonormal pairs, which forms a 7-dimensional Stiefel manifold
$V_2(\Sym_0(3))$.

For a unit-norm matrix in $\Sym_0(3)$, if it has a repeated eigenvalue, then all eigenvalues are determined by the trace-free and unit-norm conditions. In fact, the repeated-eigenvalue locus $\mathcal{V}$ among unit matrices in
$\Sym_0(3)$ can be parameterized as follows, 
\[
 \mathcal V=\left\{
    \pm\sqrt{\frac32}\left(xx^T-\frac13\Id\right):
                  [x]\in\RP^2\right\}.
\]
It is the disjoint union of two copies of $\RP^2$. 
It follows from \eqref{eq-PQ} that any orthonormal pair $\{P,Q\}$ whose pencil has a repeated eigenvalue at some angle $\theta$ can be determined by a unit  $C\in\mathcal  V$ and a unit matrix 
$Y\in C^\perp\subset \Sym_0(3)$. 
The data $\{\theta, C, Y\}$ forms a compact smooth parameter
space of dimension $1+2+3=6$. Its image in $V_2(\Sym_0(3))$ is closed
and, by Sard's theorem, has measure zero. The complement is
therefore open and dense. 

For fixed $[x]\in\RP^2$, the matrices
with $ x$ as eigenvectors form a 3-dimensional subspace of
$\Sym_0(3)$. So the space of pairs with a common eigenvector are consequently the image of a compact parameter bundle of dimension $2+\dim V_2(\mathbb{R}^3)=5$ 
Its complement is likewise open and
dense. 

Consequently, as the intersection of the aforementioned complements, the orthonormal pairs which share no common eigenvector and  whose pencil $C_\theta=\cos \theta \,P+\sin\theta\,Q$ has no repeated eigenvalue for every $\theta\in [0,2\pi]$ are open and dense in $V_2(\Sym_0(3))$. 

Finally, we show that the functionals defined in \eqref{eq:spectral-areas} are continuous on $V_2(\Sym_0(3))$. For an arbitrary orthonormal pair $\{P,Q\}$, 
since $\tr C_\theta = 0$ and $\tr(C_\theta^2) = 1$, the characteristic polynomial of $C_\theta = \cos\theta\,P + \sin\theta\,Q$ is given by
\begin{equation}\label{eq:pencil-characteristic}
    \lambda^3 - \frac{1}{2}\lambda - \det C_\theta.
\end{equation} 
Set
\begin{equation}\label{eq:normalized-cubic}
    f(\theta) \triangleq 3\sqrt{6}\det C_\theta.
\end{equation}
By considering the first derivative of \eqref{eq:pencil-characteristic} with respect to $\lambda$, it is straightforward to verify that this polynomial admits a repeated eigenvalue if and only if $|f(\theta)| = 1$. 
Since $f$ is a trigonometric polynomial and $f(\theta+\pi)=-f(\theta)$, $1 - f(\theta)^2$ cannot be identically zero. Consequently, there are only finitely many angles $\theta \in [0, 2\pi)$ at which $C_\theta$ has repeated eigenvalues. 

Let $\{P^{(k)}, Q^{(k)}\}$ be a sequence of orthonormal pairs converging to $\{P, Q\}$. It follows naturally that the ordered eigenvalues $\lambda_j^{(k)}$ converge uniformly to $\lambda_j$. Note that whenever $\lambda_j^{(k)}$ is a simple eigenvalue, the adjugate matrix of $C_\theta^{(k)} - \lambda_j^{(k)} I$ is non-zero, and any of its non-zero columns yields an eigenvector corresponding to $\lambda_j^{(k)}$. Thus, at any angle where the limiting matrix has a simple eigenvalue, the corresponding spectral projection $\Pi_j^{(k)} \triangleq x_j^{(k)}{x_j^{(k)}}^T$ converges (as does the eigenvector $x_j^{(k)}$, given an appropriate choice of sign). Then, by \eqref{eq:eigenvector-derivative}, 
\[
    (\lambda_j^{(k)})' = \tr\big(\Pi_j^{(k)}(C_\theta^{(k)})'\big) = \langle \Pi_j^{(k)},(C_\theta^{(k)})'\rangle
\]
also converges pointwise to $\lambda_j'$. 
Furthermore, we have the following uniform bound for all $k$, 
\[
    |(\lambda_j^{(k)})'| \leq \|(C_\theta^{(k)})'\|
    = 1.
\]
Then, by the Dominated Convergence Theorem, the functionals $\mathcal{A}_j$ defined in \eqref{eq:spectral-areas} are continuous.
Observe also that the determinant integral  $\mathcal{D}(P,Q)$ is continuous. Therefore, we can extend the identity \eqref{eq:spectral-area-identity} 
from the open dense set to every orthonormal
pair. 
\end{proof}

Next, we seek an exact expression for $\mathcal{A}_j$ by solving for the eigenvalues of $C_\theta$ via \eqref{eq:pencil-characteristic}. Computing the discriminant of \eqref{eq:pencil-characteristic} and using the fact that \(A\) has three real eigenvalues, we conclude that \(|f|\leq1\). 

We 
set
\[
 \psi\triangleq \frac13\arcsin f\in[-\pi/6,\pi/6].
\]
Then the ordered eigenvalues can be written as 
\begin{equation}\label{eq:ordered-eigenvalue-parametrization}
\lambda_{\mathrm{mid}}=-\sqrt{\frac23}\sin\psi,\qquad \lambda_{\max}=-\sqrt{\frac23}\sin(\psi-2\pi/3),\qquad
 \lambda_{\min}=-\sqrt{\frac23}\sin(\psi+2\pi/3).
\end{equation}
Substituting them into Proposition~\ref{lem:spectral} gives the exact expression
\begin{equation}\label{eq:matrix-trig-expression}
 2\int_{\mathbb{S}^2}\bigl|\det[x,Px,Qx]\bigr|\,d S
 =2\mathcal D(P,Q)=\frac16\int_0^{2\pi}
 \left[1+2\cos(2\psi)
      +\bigl(2\cos(2\psi)-1\bigr)(\psi')^2\right]\dd\theta,
\end{equation}
where the derivative is understood almost everywhere. 
In the next subsection, we will establish an upper bound for the trigonometric integral in \eqref{eq:matrix-trig-expression} over all functions of the form $\psi=\frac13\arcsin f$, where
\[
f\in \operatorname{span}\{\cos\theta,\sin\theta,\cos3\theta,\sin3\theta\},~~~\|f\|_\infty\leq1.
\]
This class includes $f=3\sqrt6\det(P\cos\theta+Q\sin\theta)$, which is a trigonometric polynomial of order $3$ satisfying $f(\theta+\pi)=-f(\theta)$.

\subsection{A trigonometric integral inequality}
\label{sub-3.2}
\begin{proposition}\label{lem:trig}
Given a function $f\in \operatorname{span}\{\cos\theta,\sin\theta,\cos3\theta,\sin3\theta\}$ with $\|f\|_\infty\leq1$. Set $\psi=\frac13\arcsin f$. 
Then the integral \begin{equation}\label{eq:trig-inequality} \mathcal T(f)\triangleq \frac16\int_0^{2\pi}
 \left[1+2\cos(2\psi)
       +\bigl(2\cos(2\psi)-1\bigr)(\psi')^2\right]d\theta\leq2\sqrt3,
\end{equation}
with equality if and only if
$f(\theta)=\cos(3\theta-\theta_0)$ for some $\theta_0\in\R$. Here the derivative $\psi'$ is understood almost everywhere.
\end{proposition}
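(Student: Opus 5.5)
The plan is to change variables from $\psi$ back to $f$, use a Bernstein-type bound on $f'$, and reduce what remains to a finite-dimensional extremal problem. Since $f=\sin 3\psi$ with $3\psi\in[-\pi/2,\pi/2]$, we have $\cos 3\psi=\sqrt{1-f^2}$. Wherever $|f|<1$,
\[
\psi'=\frac{f'}{3\sqrt{1-f^2}},\qquad (\psi')^2=\frac{(f')^2}{9(1-f^2)}.
\]
Note that $|\psi|\le\pi/6$ gives $2\cos 2\psi-1\ge 0$, so the integrand of $\mathcal T(f)$ is increasing in $(\psi')^2$.

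\textbf{Step 1 (Bernstein--Szeg\H{o} bound).} Because $f$ is a real trigonometric polynomial of degree $3$ with $\|f\|_\infty\le 1$, the Szeg\H{o} form of Bernstein's inequality gives
\[
(f')^2+9f^2\le 9 .
\]
Hence $|\psi'|\le 1$ almost everywhere. For $f=\cos(3\theta-\theta_0)$, $\psi$ is a piecewise linear zig-zag with slope $\pm1$ between $\pm\pi/6$. In that case the integrand equals $4\cos2\psi$, and a direct computation gives $\mathcal T=\frac{2}{3}\cdot 2\pi\cdot\frac{3}{\pi}\sin\frac{\pi}{3}=2\sqrt3$, confirming the equality case.

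\textbf{Step 2 (the balancing inequality; main obstacle).} Write the integrand as
\[
4\cos 2\psi-(2\cos 2\psi-1)\bigl(1-(\psi')^2\bigr),
\]
where the subtracted term is $\ge 0$ by Step 1. The pointwise bound $4\cos2\psi$ alone is too lossy: for $f\equiv 0$ it gives $8\pi/3>2\sqrt3$. So one must show that the time $\psi$ spends near $0$ is paid for by the deficit $1-(\psi')^2$. I would use the coarea formula in the variable $s=\psi$. Since $f$ has degree $3$, each level set $\{\psi=s\}$ has at most $6$ points. At each crossing the contribution is
\[
\frac{1+2\cos 2s}{|\psi'|}+(2\cos 2s-1)|\psi'|\quad\text{per unit } ds.
\]
The total time is fixed at $2\pi$, which gives the constraint $\int\sum 1/|\psi'|\,ds=2\pi$. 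I would then optimize over admissible crossing speeds, using $|\psi'|\le 1$ and at most $6$ crossings, via a Lagrange multiplier. The aim is to show the maximum occurs exactly when there are $6$ crossings at every level with $|\psi'|\equiv 1$.

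\textbf{Fallback for Step 2.} If the coarea optimization does not close, I would exploit the symmetry group of the problem. Rotations $\theta\mapsto\theta+c$ and reflections let me normalize $f=r_1\cos\theta+r_3\cos(3\theta-\beta)$. This leaves three parameters $(r_1,r_3,\beta)$, constrained by $\|f\|_\infty\le 1$. Next, I would show that $\mathcal T$ increases under scaling $f\mapsto tf$ for $t\le 1/\|f\|_\infty$, which reduces to the boundary $\|f\|_\infty=1$. Finally, I would show that moving weight from the $e^{\pm i\theta}$ modes to the $e^{\pm3i\theta}$ modes increases $\mathcal T$, so the extremum is attained at $r_1=0$.

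\textbf{Step 3 (equality).} Equality in the chain forces $|\psi'|=1$ almost everywhere, i.e.\ equality in Bernstein--Szeg\H{o} on a set of full measure. This happens only for $f=\cos(3\theta-\theta_0)$.
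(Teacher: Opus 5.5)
Your Step~1 is correct: Szeg\H{o}'s inequality gives $|\psi'|\le 1$, and each level is hit at most $6$ times. The gap is in Step~2, which carries the whole proof and cannot close as proposed.

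The constraints you keep are $|\psi|\le\pi/6$, $|\psi'|\le1$, at most $6$ crossings per level, and total time $2\pi$. These admit Lipschitz competitors whose integral exceeds $2\sqrt3$, so no Lagrange-multiplier analysis of that relaxed problem can single out the zig-zag. For instance:
\begin{itemize}
\item let $\psi$ make three unit-slope oscillations between $\pm\sigma$, which takes time $12\sigma$ and contributes $\int 4\cos 2\psi\,d\theta=24\sin2\sigma$;
\item insert into one up-stroke a slow monotone passage through $\psi\approx 0$ that uses up the remaining time $2\pi-12\sigma$; there the integrand is $\approx 3$.
\end{itemize}
Every level is still crossed at most six times. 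The value tends to $\frac16\bigl[24\sin 2\sigma+3(2\pi-12\sigma)\bigr]$, which for $\cos2\sigma=\frac34$ equals $\sqrt7+\pi-3\arccos\frac34\approx 3.62>2\sqrt3\approx3.46$.

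Your own per-crossing formula shows why. Lingering at level $s$ with $|\psi'|\to 0$ earns $1+2\cos2s$ per unit time, i.e.\ $3$ at $s=0$. A unit-speed crossing of the extreme levels $s=\pm\pi/6$ earns only $4\cos 2s=2$. So the relaxed optimum trades edge time for a plateau at $0$. What the relaxation throws away is the rigidity of $f$ as a trigonometric polynomial in the harmonics $1$ and $3$. Any proof must use that structure beyond Bernstein's bound and the zero count. (Minor slip: for $f\equiv0$ the pointwise bound gives $\frac16\int_0^{2\pi}4\,d\theta=4\pi/3$, not $8\pi/3$.)

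Your fallback runs the reduction in the wrong order. Monotonicity under $f\mapsto tf$ is false in general. Expanding the integrand gives
\[
\mathcal T(f)=\pi+\frac1{54}\int_0^{2\pi}\bigl((f')^2-4f^2\bigr)\,d\theta+O(\|f\|_{C^1}^4).
\]
For $f=r\cos\theta$ this yields $\mathcal T(tf)=\pi-\frac{\pi}{18}r^2t^2+O(t^4)$, which decreases in $t$. So first-harmonic-dominated $f$ cannot be pushed to $\|f\|_\infty=1$.

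The claim that shifting weight into the $e^{\pm3i\theta}$ modes increases $\mathcal T$ is stated without any mechanism, yet it is the entire content of the paper's argument. The paper writes $f=f_1+f_3$ and $\mathcal T(f)=\int[V(f)+U(f)(f')^2]\,d\theta$, and shows three things:
\begin{itemize}
\item at $f_3=k\cos(3\theta-\theta_0)$ the Euler--Lagrange expression reduces to $9(1-k^2)U'(f_3)$, which has period $2\pi/3$ and is therefore orthogonal to first harmonics;
\item along first-harmonic directions $\xi$ the second variation satisfies $D^2\mathcal T_f[\xi,\xi]\le-\frac1{216}\int\xi^2\,d\theta$ uniformly on the open unit ball;
\item the averaging identity $f_3(\theta)=\frac13\sum_{j=0}^{2}f(\theta+2\pi j/3)$ keeps the whole segment $f_3+sf_1$ in the ball.
\end{itemize}
Together these give $\mathcal T(f)\le\mathcal T(f_3)-\frac1{432}\int_0^{2\pi}f_1^2\,d\theta$. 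Only \emph{after} removing $f_1$ does scaling work, since $k\mapsto\mathcal T(k\cos3\theta)$ is increasing; a limiting argument handles $\|f\|_\infty=1$.

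Your Step~3 inherits the gap, because there is no established chain whose only slack is $1-(\psi')^2$. In the paper, equality comes from the strict loss term in $f_1$ together with strict monotonicity in $k$.
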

We will prove this proposition by a variational approach.
\begin{lemma}\label{lem-2nd}
Given 
a function $\xi\in\mathrm{Span}\{\cos\theta, \sin\theta\}$. 
\begin{enumerate}
    \item[{\rm(1)}] For any $f\in \operatorname{Span}\{\cos3\theta, \sin3\theta\}$ with $\|f\|_\infty<1$, the first variation $D\mathcal{T}_f[\xi]$ of $\mathcal{T}(f)$ along the direction determined by $\xi$ vanishes.
 \item[{\rm(2)}] For any $f\in \operatorname{Span}\{\cos\theta,\sin\theta, \cos3\theta, \sin3\theta\}$ with $\|f\|_\infty<1$, the second variation $D^2\mathcal T_f[\xi,\xi]$
 of $\mathcal{T}(f)$ along the direction determined by $\xi$ satisfies 
\begin{equation}\label{eq:trig-concavity}
 D^2\mathcal T_f[\xi,\xi]
 \leq-\frac1{216}\int_0^{2\pi}\xi^2\,d\theta.
\end{equation}
\end{enumerate}
\end{lemma}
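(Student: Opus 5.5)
The plan is to work directly with the Lagrangian in the variable $\psi$, treating $f\mapsto\psi=\frac13\arcsin f$ as a change of variables. Write
\[
L(\psi,\psi')=\tfrac16\bigl[1+2\cos 2\psi+(2\cos2\psi-1)(\psi')^2\bigr],\qquad \mathcal T(f)=\int_0^{2\pi}L(\psi,\psi')\,d\theta .
\]
Since $\|f\|_\infty<1$, $\psi$ is smooth and takes values in the open interval $(-\pi/6,\pi/6)$. Along $f_\varepsilon=f+\varepsilon\xi$ we have
\[
\psi_\varepsilon=\psi+\varepsilon\eta+\tfrac{\varepsilon^2}{2}\zeta+O(\varepsilon^3),\qquad \eta=\frac{\xi}{3\cos3\psi},\qquad \zeta=\frac{\xi^2\sin3\psi}{3\cos^3 3\psi}.
\]
Here $\zeta$ is obtained by differentiating $\frac13\arcsin$ twice. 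Hence
\[
D\mathcal T_f[\xi]=\int_0^{2\pi}E(\psi)\,\eta\,d\theta,\qquad
D^2\mathcal T_f[\xi,\xi]=\int_0^{2\pi}\bigl(L_{\psi\psi}\eta^2+2L_{\psi\psi'}\eta\eta'+L_{\psi'\psi'}(\eta')^2\bigr)d\theta+\int_0^{2\pi}E(\psi)\,\zeta\,d\theta ,
\]
where $E(\psi)=L_\psi-(L_{\psi'})'$ is the Euler--Lagrange expression.

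\textbf{Part (1).} If $f\in\operatorname{span}\{\cos3\theta,\sin3\theta\}$, then $f$ has period $2\pi/3$, and so do $\psi$, $E(\psi)$ and $1/\cos3\psi$. Therefore $E(\psi)/(3\cos3\psi)$ has period $2\pi/3$, and its Fourier series contains only frequencies divisible by $3$. It is thus $L^2$-orthogonal to $\cos\theta$ and $\sin\theta$, so $D\mathcal T_f[\xi]=0$. This uses only symmetry, not an explicit solution of the Euler--Lagrange equation.

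\textbf{Part (2), reduction.} Here $f$ is general, so $E(\psi)$ need not vanish, and the plan is to rewrite everything in terms of $\xi$ and $\xi'$.
\begin{enumerate}
\item Express $\eta$ and $\eta'$ through $\xi,\xi'$, using $\eta'=\frac{\xi'}{3\cos3\psi}+\frac{\xi\,\psi'\sin3\psi}{\cos^23\psi}$.
\item Integrate by parts the cross terms $\xi\xi'\,(\cdots)$ and the term $E\zeta$, which contains $(L_{\psi'})'$. This moves all derivatives off $\xi$ except through $\xi''$.
\item Replace $\xi''$ by $-\xi$. This is the only place where $\xi\in\operatorname{span}\{\cos\theta,\sin\theta\}$ enters, together with the identity $\xi^2+(\xi')^2\equiv c^2$ and $\int_0^{2\pi}\xi^2=\pi c^2$.
\end{enumerate}
The result is an integral of a quadratic form
\[
\alpha(\psi,\psi')\,\xi^2+\beta(\psi,\psi')\,(\xi')^2 ,
\]
with the cross term removed.

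\textbf{Part (2), pointwise bound.} I would then show pointwise that $\alpha\le-\frac1{216}$ and $\beta\le 0$. The factor $\frac1{216}=\frac16\cdot\frac1{36}$ suggests the main negative contribution is the potential term $-\frac{2}{6}\cos2\psi\,\eta^2$, with the $1/9$ coming from $\eta=\xi/(3\cos3\psi)$. The facts available are $2\cos2\psi-1\ge0$, $\cos2\psi\ge\frac12$ and $\cos3\psi>0$ on $(-\pi/6,\pi/6)$, so the kinetic coefficient $L_{\psi'\psi'}$ is nonnegative. If a pointwise bound on $\beta$ fails, the fallback is to trade $(\xi')^2=c^2-\xi^2$ and compare the averages of $\xi^2$ and $(\xi')^2$, both equal to $\pi c^2$.

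\textbf{Main obstacle.} The hard step is this sign analysis. The positive kinetic term $L_{\psi'\psi'}(\eta')^2$ and the terms involving $\psi'$ produced by $E(\psi)\zeta$ must be absorbed. The factors $1/\cos3\psi$ also blow up as $|f|\to1$, so the bound has to be uniform in that limit. I would first test the whole computation on the extremal family $f=r\cos3\theta$ to identify the correct grouping of terms. Then I would try to write the resulting integrand as $-\frac1{216}\xi^2$ minus an explicit sum of squares in $(\xi,\xi',\psi')$.
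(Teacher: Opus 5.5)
Part (1) is fine and is essentially the paper's argument: the $L^2$-gradient $E(\psi)/(3\cos3\psi)$ has period $2\pi/3$, so it is orthogonal to $\cos\theta,\sin\theta$.

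Part (2) has a genuine gap. Your planned pointwise bound $\beta\le0$ is false everywhere. The only source of $(\xi')^2$ is $L_{\psi'\psi'}(\eta')^2$, so
\[
\beta=\frac{2\cos2\psi-1}{27\cos^2 3\psi}=\frac{1}{27(1-y^2)(1-4y^2)}\ge\frac1{27},\qquad y=\sin\psi .
\]
Hence no rewriting as ``$-\frac1{216}\xi^2$ minus squares'' can hold pointwise; take a point where $\xi=0\neq\xi'$. The fallback $(\xi')^2=c^2-\xi^2$ only trades this for the nonlocal term $c^2\int\beta\,d\theta$, which blows up as $\|f\|_\infty\to1$. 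Also, removing the cross term and the $E\zeta$ term never produces $\xi''$. The place where $\xi''=-\xi$ must be used is the kinetic term itself: $\int\beta(\xi')^2\,d\theta=\int(\beta+\tfrac12\beta'')\xi^2\,d\theta$. This turns the second variation into a single-coefficient integral $\int c(\theta)\xi^2\,d\theta$. It is much cleaner in the variable $f$, where the perturbation is linear and there is no $E\zeta$ term. Write $\mathcal T(f)=\int_0^{2\pi}[V(f)+U(f)(f')^2]\,d\theta$ with $V=\frac12-\frac23y^2$ and $U=\frac{1}{54(1-y^2)(1-4y^2)}$, so that $\beta=2U$. Then $D^2\mathcal T_f[\xi,\xi]=\int_0^{2\pi}(V''+2U-U'f'')\xi^2\,d\theta$, and what must be shown is that this coefficient is at most $-\frac1{216}$.

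Second, that coefficient depends on $f''$, i.e.\ on $\psi''$ and not only on $(\psi,\psi')$ as you wrote. It cannot be controlled by the value of $f$ alone. Near points where $|f|\to1$, both $V''$ and $U'(f)$ are of order $(1-4y^2)^{-3}$, so the sign is decided by the precise value of $f''$ there. Split $f=f_1+f_3$ into harmonics and use $f''=-9f+8f_1$. The $-9f$ part cancels the leading singularity and leaves $D_0=V''+2U+9fU'=-\frac{3-4y^2}{27(1-y^2)^2(1-4y^2)}<0$. The remainder $-8f_1U'(f)$, however, is still of order $|f_1|(1-4y^2)^{-3}$.

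The missing idea is to control the first harmonic by the distance of $f$ to $\pm1$. Since $f_3(\theta)=\frac13\sum_{k=0}^{2}f(\theta+\frac{2k\pi}{3})$, one has $\|f_3\|_\infty\le\|f\|_\infty\le1$. Hence $|f_1|+|f|=|f_3|\le1$ wherever $f_1f<0$. Where $f_1f\ge0$, the term $-8f_1U'(f)$ is nonpositive because $U'(f)$ has the sign of $f$. Thus $-8f_1U'(f)\le8(1-|f|)|U'(f)|$. Since $1-|f|=(1-2Y)^2(1+Y)$ with $Y=|y|$, this bound is of the same order $(1-2Y)^{-1}$ as $D_0$. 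The explicit inequality $\mathcal Q(Y)=16Y^4+16Y^3-16Y^2-4Y+9\ge3$ on $[0,\frac12]$ then gives the constant $\frac1{216}$.

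Testing on $f=r\cos3\theta$ cannot reveal this, since there $f_1=0$ and the dangerous term vanishes. Without such an estimate, the uniformity as $|f|\to1$, which you rightly flag as the main obstacle, remains unresolved.
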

\begin{proof}
To perform the variational calculation, we rewrite the integrand $\mathcal T(f)$ as a function of
$f$ and $f'$. Introduce 
\[
 y=\sin\psi\in\left(-\frac12,\frac12\right),\qquad
 f=3y-4y^3,\qquad \frac{d f}{d y}=3(1-4y^2).
\]
The identities
\[
 2\cos(2\psi)-1=1-4y^2,\qquad
 1-f^2=(1-y^2)(1-4y^2)^2, \qquad \psi' =  \frac{f'}{3\sqrt{1-f^2}}
\]
transform $\mathcal{T}(f)$ 
into 
\begin{equation}\label{eq:trig-vw}
 \begin{split}
 \mathcal T(f)&=\int_0^{2\pi}
                  \bigl[V(f)+U(f)(f')^2\bigr]d\theta,
 \end{split}
\end{equation}
where 
\begin{equation}\label{eq-UV}
V(f)=\frac12-\frac23y^2,\qquad
 U(f)=\frac{1}{54(1-y^2)(1-4y^2)}.
 \end{equation}
Taking the derivative with respect to $f$  
yields 
\begin{align}
 V'(f)=-\frac{4y}{9(1-4y^2)},\qquad 
 V''(f)=-\frac{4(1+4y^2)}{27(1-4y^2)^3}, \qquad  U'(f)=\frac{y(5-8y^2)}{81(1-y^2)^2(1-4y^2)^3}. 
       \label{eq:v-derivatives}
\end{align}

Using the above notation, a direct computation yields 
\begin{equation}\label{eq:trig-first-variation}
 D\mathcal{T}_f[\xi]
 =\left.\frac{d}{dt}\right|_{t=0}\mathcal{T}(f+t\xi)=\int_0^{2\pi}
   \bigl[V'(f)-U'(f)(f')^2-2U(f)f''\bigr]\xi\,d\theta.
\end{equation}
When $f$ belongs to $\operatorname{Span}\{\cos3\theta, \sin3\theta\}$, write $f=k\cos (3\theta-\theta_0)$ with $0\leq k<1$, we  have 
$$f''=-9f,\qquad(f')^2=9(k^2-f^2).$$
Consequently, using \eqref{eq-UV} and \eqref{eq:v-derivatives}, we obtain 
\begin{equation}\label{eq:trig-amplitude-identity}
V'(f)-U'(f)(f')^2-2U(f)f''= V'(f)-9(k^2-f^2)U'(f)+18fU(f)=9(1-k^2)U'(f),
\end{equation}
which has period $2\pi/3$ and is
therefore orthogonal to $\xi$. 
This completes the proof of the first conclusion.


For the second conclusion, it is straightforward to verify that 
\[
 D^2\mathcal T_f[\xi,\xi]=\left.\frac{d^2}{dt^2}\right|_{t=0}\mathcal{T}(f+t\xi)
 =\int_0^{2\pi}
 \left[(V''+U''(f')^2)\xi^2
       +4U'f'\xi\xi'+2U(\xi')^2\right]d\theta.
\]
Periodicity and $\xi''=-\xi$ imply
\begin{align*}
 \int_0^{2\pi}4U'f'\xi\xi'\,d\theta
 &=-2\int_0^{2\pi}
       \bigl(U''(f')^2+U'f''\bigr)\xi^2\,d\theta,\\
 \int_0^{2\pi}2U(\xi')^2\,d\theta
 &=\int_0^{2\pi}
       \bigl(2U+U''(f')^2+U'f''\bigr)\xi^2\,d\theta.
\end{align*}
It follows that 
\begin{equation}\label{eq-trig-second-variation}
 D^2\mathcal T_f[\xi,\xi]
 =\int_0^{2\pi}(V''+2U-U'f'')\xi^2\,\dd\theta.
\end{equation}

Now we write $f=f_1+f_3$, with $f_1\in \mathrm{Span}\{\cos\theta, \sin\theta\}$, and 
$$
 f_3=\mathfrak a\cos3\theta+\mathfrak b\sin3\theta=k\cos(3\theta-\theta_0),
 $$
 where $k=\sqrt{\mathfrak{a}^2+\mathfrak{b}^2}\geq0$. 
Then substituting $f''=-9f+8f_1$ into \eqref{eq-trig-second-variation} yields 
\begin{equation}\label{eq:trig-second-variation}
 D^2\mathcal T_f[\xi,\xi]=
\int_0^{2\pi}\big(D_0(f)-8f_1U'(f)\big)\xi^2\,\dd\theta, 
\end{equation}
where $
 D_0(f)\triangleq V''(f)+2U(f)+9fU'(f)$.  It follows from \eqref{eq-UV} and \eqref{eq:v-derivatives} that  
\begin{equation}\label{eq:trig-d0}
 D_0(f)=-\frac{3-4y^2}{27(1-y^2)^2(1-4y^2)}<0.
\end{equation}
 
Note that  
$$f_3(\theta) = \frac{1}{3}\left[f(\theta) + f\left(\theta + \frac{2\pi}{3}\right) + f\left(\theta + \frac{4\pi}{3}\right)\right],$$
which yields  
\begin{equation}\label{eq-f3norm}
 \|f_3\|_\infty\leq\|f\|_\infty\leq 1.   
\end{equation}
If $f_1f\geq0$, then $f_1U'(f)\geq0$, since $U'(f)$ and $f$ have the same sign. If $f_1f<0$, the identity
$f_3=f-f_1$ and \eqref{eq-f3norm} 
yield 
\[
 |f_1|+|f|=|f_3|\leq k\leq1.
\]
In either case, we obtain 
\begin{equation}\label{eq:trig-coefficient-bound}
 D_0(f)-8f_1U'(f)
 \leq D_0(f)+8(1-|f|)|U'(f)|.
\end{equation}
To bound the right-hand side, put $Y=|y|\in[0,1/2)$, so that
$|f|=3Y-4Y^3$. It is straightforward to verify that 
\begin{equation*}
 \begin{split}
 -D_0(f)-8(1-|f|)|U'(f)|
 &=\frac{\mathcal Q(Y)}
 {81(1-Y^2)^2(1-2Y)(1+2Y)^3}\\
 \end{split}
\end{equation*}
where
\[
 \mathcal Q(Y)=16Y^4+16Y^3-16Y^2-4Y+9.
\]
Note that, on $[0, \frac{1}{2}]$, 
$$Q(Y)\geq 9-16Y^2-4Y\geq 3,~~~~~~(1-Y^2)^2(1-2Y)(1+2Y)^3\leq 8,
 $$
which implies 
\begin{equation}\label{eq:trig-uniform-gap}
 \begin{split}
 -D_0(f)-8(1-|f|)|U'(f)|
 \geq\frac{3}{81\cdot8}=\frac1{216}.
 \end{split}
\end{equation}
Combining \eqref{eq:trig-second-variation} $\sim$
\eqref{eq:trig-uniform-gap},
we obtain \eqref{eq:trig-concavity}.
This completes the proof of the second conclusion. 
\end{proof}
\begin{proof}[Proof of Proposition~{\rm \ref{lem:trig}}]
As in the proof of Lemma~\ref{lem-2nd},  we write $f=f_1+f_3$, with $f_1\in \mathrm{Span}\{\cos\theta, \sin\theta\}$, and 
$$
 f_3=\mathfrak a\cos3\theta+\mathfrak b\sin3\theta=k\cos(3\theta-\theta_0).
 $$
 
We first assume $\|f\|_\infty<1$; passage to the boundary of the unit
ball will be justified below.  
It follows from \eqref{eq-f3norm} 
that the entire segment
$$f_3+sf_1=sf+(1-s)f_3, ~~~~~~0\leq s\leq1,$$ lies in the open unit ball.

Applying Lemma~\ref{lem-2nd} to $f_s=f_3+sf_1$ with $\xi=f_1$ yields 
$$\frac{d}{ds}\mathcal{T}(f_s)=\left.\frac{d}{ds}\right|_{s=0}\mathcal{T}(f_s)+\int_{0}^{s}\frac{d^2}{dt^2}\mathcal{T}(f_3+tf_1)dt\leq-\frac {s}{216}\int_0^{2\pi}\xi^2\,d\theta,~~~0\leq s\leq 1.$$
Integrating it with respect to $s$ from $0$ to $1$, we obtain
\begin{equation}\label{eq:trig-first-harmonic-loss}
 \mathcal{T}(f_3+f_1)
 \leq\mathcal{T}(f_3)-\frac1{432}\int_0^{2\pi}f_1^2\,d\theta,
\end{equation}
which implies that any $f$ with $\|f\|_\infty<1$ and $f_1\neq 0$ fails to maximize $\mathcal{T}(f)$ in the open unit ball. 


Next we first extend \eqref{eq:trig-first-harmonic-loss} to an arbitrary trigonometric polynomial $f$ with $\|f\|_\infty\leq1$. Given such an $f$, set $M\triangleq\|f''\|_\infty$. 
If $M=0$, then $f=0$, and there is nothing to prove. Otherwise,
at a given $\theta$ applying Taylor's formula  to $g=f$ (if $f(\theta)\geq 0$) or $g=-f$ (if $f(\theta)<0$), we have 
$$1\geq g(\theta + h) = g(\theta) + g'(\theta)h + \frac{1}{2}g''(\xi)h^2\geq g(\theta) + g'(\theta)h - \frac{M}{2}h^2 .$$
Then, chooseing  
$h=g'(\theta)/M$ yields  
\[
 (f'(\theta))^2=(g'(\theta))^2\leq 2M(1-g(\theta)) \leq2M(1-|f(\theta)|)
                    \leq2M(1-f(\theta)^2).
\]
For $0<\rho<1$, set $\psi_\rho=\frac13\arcsin(\rho f)$. Then  $\psi_\rho$ converge uniformly to
$\psi=\frac13\arcsin f$, and 
\begin{equation}\label{eq:trig-boundary-derivative}
 (\psi_\rho')^2
 =\frac{\rho^2(f')^2}{9(1-\rho^2f^2)}\leq\frac{2M}{9}.
\end{equation}
The uniform derivative bound implies that $\psi$ is Lipschitz continuous. Away from the finitely many points where $f = \pm 1$, $\psi$ has the standard derivative $\psi'$, and the sequence $\psi_\rho'$ converges pointwise to $\psi'$. Consequently, the integrands defining $\mathcal T(\rho f)$ are uniformly bounded and converge almost everywhere. 
Dominated convergence yields 
\begin{equation}\label{eq:trig-boundary-continuity}
 \lim_{\rho\to1}\mathcal T(\rho f)=\mathcal T(f).
\end{equation}
Applying \eqref{eq:trig-first-harmonic-loss} to $\rho f$ and letting
$\rho\to1$ therefore extends that inequality to the closed unit ball. Then, we derive that  any $f$ with $\|f\|_\infty\leq1$ and $f_1\neq 0$ fails to maximize $\mathcal{T}(f)$. 

It remains to consider the case of $f_1=0$. Note that we can assume $f=k\cos3\theta$, since  
translation in 
$\theta$ leaves $\mathcal T$ unchanged. 
For this family,
$\partial_kf=f/k$. If $0<k<1$, it follows from \eqref{eq:trig-amplitude-identity}
 that 
\begin{equation}\label{eq:trig-amplitude-monotonicity}
 \frac{d}{d k}\mathcal T(k\cos3\theta)
 =\frac{9(1-k^2)}{k}
   \int_0^{2\pi}fU'(f)\,d\theta>0. 
\end{equation}
At $k=1$, the function
$\psi=\frac13\arcsin(\cos3\theta)$ is a triangular wave with values
in $[-\pi/6,\pi/6]$ and $(\psi')^2=1$ almost everywhere. During one
period it traverses this interval six times. Hence
\begin{align*}
 \mathcal T(\cos3\theta)
 =\frac23\int_0^{2\pi}\cos(2\psi)\,d\theta
 =\frac23\cdot6\int_{-\pi/6}^{\pi/6}\cos(2t)\,d t
 =2\sqrt3.
\end{align*}
Combining \eqref{eq:trig-amplitude-monotonicity} with continuity at $k=0$, we conclude that \eqref{eq:trig-inequality} holds. The equality case is straightforward. This completes the proof.

\end{proof}

\begin{proof}[Proof of Theorem~\ref{thm:matrix}]
As mentioned before, we need only prove this theorem for orthonormal pairs, for which the matrix inequality \eqref{eq:matrix-inequality} for orthonormal pairs follows from \eqref{eq:matrix-trig-expression} and 
Proposition~\ref{lem:trig}.

Now suppose that equality in \eqref{eq:matrix-inequality} holds. Then equality in \eqref{eq:trig-inequality} also holds. By Proposition~\ref{lem:trig}, we have $f(\theta)=\cos(3\theta-\theta_0)$ for some $\theta_0$. 
Consequently $|\psi'|=1$ almost everywhere, and
\begin{equation}\label{eq:spectral-equality-norm}
 \sum_j(\lambda_j')^2=(\psi')^2
 =1=\tr\bigl((C_\theta')^2\bigr).
\end{equation}
At a simple-eigenvalue angle, express $C_\theta'$ in an orthonormal
eigenbasis of $C_\theta$. By \eqref{eq:eigenvector-derivative}, the diagonal entries of $C_\theta'$ are $\langle C_\theta' x_j, x_j\rangle=\lambda_j'$. Then 
\eqref{eq:spectral-equality-norm} implies that the sum of the squares
of all off-diagonal entries is zero. Hence $C_\theta$ and $C_\theta'$ can be  simultaneously diagonalized, which implies 
$[C_\theta,C_\theta']=0$. Since
$[C_\theta,C_\theta']=[P,Q]$, we obtain $[P,Q]=0$. 

Conversely, assume the orthonormal pair $\{P, Q\}$ satisfies $[P,Q]=0$, then $P$ and $Q$ 
can be simultaneously diagonalized, whose  diagonal vectors are denoted by $(p_1, p_2, p_3)$ and $(q_1, q_2, q_3)$, respectively. By the orthonormal condition of $\{P, Q\}$, these two vectors form 
an orthonormal basis of $(1,1,1)^\perp\subset\R^3$. Consequently, we have 
\[
 \det[x,Px,Qx]=\det \begin{pmatrix} x_1 & p_1 x_1 & q_1 x_1 \\ x_2 & p_2 x_2 & q_2 x_2 \\ x_3 & p_3 x_3 & q_3 x_3 \end{pmatrix}= x_1 x_2 x_3 \det \begin{pmatrix} 1 & p_1 & q_1 \\ 1 & p_2 & q_2 \\ 1 & p_3 & q_3 \end{pmatrix}= \pm\sqrt3\,x_1x_2x_3.
\]
It is straightforward to verify that 
\[
 \int_{\mathbb{S}^2}|x_1x_2x_3|\,d S=1,
\]
from which equality follows.
\end{proof}


\subsection{A lower bound for the Willmore-type functional $\mathcal{W}^-$}\label{sub-3.3}
With the determinant integral inequality established by Subsections~\ref{sub-3.1} and \ref{sub-3.2}, we can now prove Montiel-Urbano's conjecture in the Lagrangian setting. This follows from the theorem below.
\begin{theorem}\label{thm:lagrangian}
For every immersed Lagrangian torus $F:T^2\to\mathbb{C}P^2$, we have
\begin{equation}\label{eq-strong-bound}
 \mathcal{W}^-(F)\geq \frac{8\pi^2}{3\sqrt{3}}+\frac{1}{9}\int_{T^2}|H|^2\,dA\geq \frac{8\pi^2}{3\sqrt{3}},
\end{equation}
and equality holds if and only if $F$ is congruent to the Clifford torus in $\mathbb{C}P^2$.
\end{theorem}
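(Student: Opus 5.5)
The plan is to combine the pointwise estimate from Theorem~\ref{thm:matrix} with the topological lower bound \eqref{eq:I-lower} and the Gauss--Bonnet theorem. First, apply Theorem~\ref{thm:matrix} to $P=\mathring A_1$, $Q=\mathring A_2$ in \eqref{eq:I-determinant}; this gives $\mathcal I(q)\le\sqrt3\,|\mathring A_1\wedge\mathring A_2|$. If $\mathring A_1,\mathring A_2$ are linearly dependent, the left side is zero and the bound is trivial. Next, use the elementary inequality
\[
|P\wedge Q|\le |P|\,|Q|\le \tfrac12\bigl(|P|^2+|Q|^2\bigr),
\]
which together with \eqref{eq-norm} yields
\[
\mathcal I(q)\le \frac{\sqrt3}{2}\Bigl(4K+12|\phi|^2-\tfrac13h^2\Bigr).
\]

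Integrate this over $T^2$ and use \eqref{eq:I-lower}. Since $\chi(T^2)=0$, Gauss--Bonnet kills the $K$ term, and \eqref{eq-W-} gives $\mathcal W^-(F)=4\int_{T^2}|\phi|^2\,dA$. Hence
\[
4\pi^2\le \frac{\sqrt3}{2}\Bigl(3\mathcal W^-(F)-\frac13\int_{T^2}h^2\,dA\Bigr),
\]
which rearranges to
\[
\mathcal W^-(F)\ge \frac{8\pi^2}{3\sqrt3}+\frac19\int_{T^2}|H|^2\,dA .
\]
As a cross-check, one can also use the Gauss equation \eqref{eq:gauss} directly: integrating it gives $\mathcal A=\int(2|\phi|^2-\tfrac12h^2)$, and the first form of \eqref{eq-norm} gives the same bound. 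This proves \eqref{eq-strong-bound}.

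For the equality case, note first that the Clifford torus gives equality, by the values of $\mathcal A(T_{\CL})$ and $\mathcal W^-(T_{\CL})$ recorded in the introduction. Conversely, suppose equality holds. Then $\int h^2=0$, so $F$ is minimal. Moreover, every pointwise inequality used above must be an equality (by continuity, everywhere), which forces three conditions:
\begin{itemize}
\item[(i)] $|\mathring A_1|=|\mathring A_2|$;
\item[(ii)] $\tr(\mathring A_1\mathring A_2)=0$;
\item[(iii)] $[A_1,A_2]=0$, from the equality case of Theorem~\ref{thm:matrix} (this needs the pair to be independent, which holds since $|\mathring A_1|^2\ge 2$).
\end{itemize}
With $h=0$, condition (ii) holds automatically, since a direct check gives $\tr(A_1A_2)=ab-2ab+ab=0$, and (i) is also automatic from \eqref{eq:shape-matrices}. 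So the real content lies in (iii).

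I would compute the commutator $[A_1,A_2]$ from \eqref{eq:shape-matrices} with $h=0$ explicitly. I expect its entries to force $a^2+b^2$ to equal a fixed constant, namely $|\phi|^2=\tfrac12$, so that $K\equiv0$ by \eqref{eq:gauss}. Then $F$ is a flat minimal Lagrangian torus with constant $|\phi|$, and the standard rigidity for such surfaces (the cubic form is parallel, so $F$ is an orbit of a torus action) identifies $F$ with the Clifford torus up to congruence. The main obstacle is exactly this last step: extracting from $[A_1,A_2]=0$ the precise algebraic constraint on $(a,b)$, and then invoking or reproving the classification of flat minimal Lagrangian tori in $\CP^2$. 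If the commutator turns out to be less restrictive than expected, I would fall back on the equality case of the Chern--Lashof inequality, i.e.\ tightness of $\widehat F:T^3\to\R^6$, to pin down $\widehat F$ as the product torus $S^1(1/\sqrt3)^3$.
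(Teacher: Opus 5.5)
Your proposal is correct and follows essentially the same route as the paper: Theorem~\ref{thm:matrix} together with AM--GM and \eqref{eq-norm} gives $\mathcal I\le 2\sqrt3\,(K+3|\phi|^2-\tfrac1{12}h^2)$, and combining this with \eqref{eq:I-lower}, \eqref{eq-W-} and Gauss--Bonnet gives the bound, while in the equality case one gets $H=0$ and $[A_1,A_2]=0$. The commutator step you expected is exactly the paper's direct computation: $[A_1,A_2]$ has only the entries $\pm K=\pm\bigl(1-2(a^2+b^2)\bigr)$ in positions $(2,3)$ and $(3,2)$, so $K\equiv0$, and the classification of flat minimal Lagrangian surfaces (Ludden--Okumura--Yano) finishes the proof without your Chern--Lashof tightness fallback.
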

\begin{proof}
We continue to use the notation of Section~\ref{sec:hopf}. Consider the Hopf bundle $\widehat \Sigma$ of $F$ and its immersion $\widehat F: \widehat \Sigma\rightarrow \mathbb{S}^5\subset \mathbb{R}^6$. Applying Theorem~\ref{thm:matrix} to \eqref{eq:I-determinant} yields the following pointwise upper bound of the absolute curvature $\mathcal{I}$ of $\widehat{F}$, 
\begin{equation}\label{eq-Ibound}
 \mathcal I=\int_{\mathbb{S}^2}|\det[x,A_1x,A_2x]|\,\dd S=\int_{\mathbb{S}^2}|\det[x,\mathring A_1x,\mathring A_2x]|\,\dd S\leq \sqrt{3}|\mathring A_1\wedge \mathring A_2|,
\end{equation}
where $A_1,A_2$ are of the form \eqref{eq:shape-matrices}, with $\mathring{A}_1,\mathring{A}_2$ their trace-free parts. 
Using the
arithmetic-geometric mean inequality, we futhure have 
\begin{equation}\label{eq-AG}
    |\mathring A_1\wedge \mathring A_2|=\sqrt{|\mathring A_1|^2|\mathring A_2|^2-\langle \mathring A_1, \mathring A_2 \rangle^2}\leq \frac{|\mathring A_1|^2+|\mathring A_2|^2}{2}.
\end{equation}
From \eqref{eq-Ibound}, \eqref{eq-AG} and \eqref{eq-norm}, we obtain 
$$ \mathcal I\leq 2\sqrt{3}(K+3|\phi|^2-\frac1{12}h^2).$$
Recall that, using the Chern-Lashof inequality, we have established the following lower bound in \eqref{eq:I-lower}:
\begin{equation*}
 \int_{T^2}\mathcal{I}(q)\,dA\geq4\pi^2.
\end{equation*}
Combining the above two inequalities with \eqref{eq-W-} and the Gauss-Bonnet formula, we obtain
\[
\mathcal{W}^-(F)\geq \frac{8\pi^2}{3\sqrt{3}}+\frac{1}{9}\int_{T^2}|H|^2\,dA.
\]

Suppose equality holds in \eqref{eq-strong-bound}. Then 
$H=0$ and 
equality also holds in \eqref{eq-Ibound} and \eqref{eq-AG}. 
By Theorem~\ref{thm:matrix}, we have $[A_1,A_2]=[\mathring A_1,\mathring A_2]=0$. Direct computation gives
\begin{equation}\label{eq:commutator-curvature}
 [A_1,A_2]=\begin{pmatrix}0&0&0\\0&0&K\\0&-K&0\end{pmatrix}. 
\end{equation}
So $K=0$. The conclusion follows from the classification of minimal Lagrangian flat surfaces in \cite{Ludden-Okumura-Yano}. 
\end{proof}
\begin{rem} If one wishes to avoid the integral of $|\phi|^2$, \eqref{eq-norm} can also be reformulated by \eqref{eq:gauss} as
\begin{equation*}\label{eq-norm2}
|\mathring{A}_1|^2+|\mathring{A}_2|^2=4+4|\phi|^2+\frac5{3}h^2=6-2K+\frac8{3}h^2.
\end{equation*}
Then \eqref{eq-Ibound} and \eqref{eq-AG} yield the following inequality for the area of $F$, 
\begin{equation}\label{eq:strong-bound}
 \mathcal{A}(F)+\frac49 \int_{T^2}|H|^2\,\dd A\geq \frac{4\pi^2}{3\sqrt3}=\mathcal{A}(T_{\CL}),
\end{equation}
from which
\begin{equation*}\label{eq:wminus-bound}
 \mathcal{W}^-(F)=2\mathcal{A}(F)+ \int_{T^2}|H|^2\,dA\geq 2\mathcal{A}(T_{\CL})+ \frac19\int_{T^2}|H|^2\,\dd A\geq2\mathcal{A}(T_{\CL})=\mathcal{W}^-(T_{\CL}).
\end{equation*}
Using \eqref{eq:gauss}, we also obtain the following lower bound of  the whole Willmore functional for closed Lagrangian surfaces in $\mathbb{C}P^2$, 
$$\mathcal{W}(F)=\mathcal{A}(F)+\int_{T^2}|H|^2\,\dd A\geq \mathcal{A}(T_{\CL})+\frac59\int_{T^2}|H|^2\,\dd A\geq \mathcal{A}(T_{\CL})=\mathcal{W}(T_{\CL}).$$
\end{rem}
\begin{theorem}\label{thm-main}
Let $F:\Sigma \rightarrow \mathbb{C}P^2$ be a closed orientable Lagrangian surfaces of genus $g\geq 1$, then 
$$\mathcal{W}^-(F)\geq \frac{8\pi^2}{3\sqrt{3}}=\mathcal W^-(T_{\CL}),\qquad\quad\mathcal{W}(F)\geq \frac{4\pi^2}{3\sqrt{3}}=\mathcal W(T_{\CL})
$$
with equality attained if and only if $F$ is congruent to the Clifford torus in $\mathbb{C}P^2$. 
\end{theorem}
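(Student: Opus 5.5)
Theorem~\ref{thm:lagrangian} already settles genus one, so the plan is to handle genus $g\geq 2$ by the same pipeline and then read off both bounds. For a closed orientable Lagrangian surface $\Sigma$ of genus $g$, the pulled-back Euler class still vanishes: $F^*\omega$ is exact in de Rham cohomology by the Lagrangian condition, and $H^2(\Sigma;\mathbb Z)\cong\mathbb Z$ is torsion-free. Hence $\widehat\Sigma\cong\Sigma\times S^1$, and by K\"unneth
\[
\sum_j b_j(\widehat\Sigma;\R)=2\sum_j b_j(\Sigma;\R)=2(2+2g)=4+4g .
\]
The Chern--Lashof inequality \eqref{eq:chern-lashof}, with the normalization $\tau=\frac{2}{\pi^2}\int_\Sigma\mathcal I\,dA$, then gives $\int_\Sigma \mathcal I\,dA\geq \frac{\pi^2}{2}(4+4g)=2\pi^2(1+g)$.

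The computations \eqref{eq:shape-matrices}, \eqref{eq-norm}, \eqref{eq:I-determinant} are all pointwise, so they carry over unchanged. Combining Theorem~\ref{thm:matrix} with \eqref{eq-AG} yields
\[
\mathcal I\leq 2\sqrt3\bigl(K+3|\phi|^2-\tfrac1{12}h^2\bigr).
\]
Integrating and using Gauss--Bonnet $\int K=2\pi\chi=4\pi(1-g)$ gives
\[
2\pi^2(1+g)\leq 2\sqrt3\Bigl(4\pi(1-g)+3\int|\phi|^2-\tfrac1{12}\int h^2\Bigr),
\]
so that
\[
4\int|\phi|^2\geq \frac{4\pi^2(1+g)}{3\sqrt3}-16\pi(1-g)+\frac19\int h^2 .
\]
Substituting into \eqref{eq-W-}, $\mathcal W^-=4\int|\phi|^2+8\pi(1-g)$, we obtain
\[
\mathcal W^-(F)\geq \frac{4\pi^2(1+g)}{3\sqrt3}+8\pi(g-1)+\frac19\int|H|^2 .
\]
For $g\geq2$ the right side is at least $\frac{4\pi^2\cdot 3}{3\sqrt3}+8\pi$, which strictly exceeds $\frac{8\pi^2}{3\sqrt3}$.

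For $\mathcal W$, I would follow the Remark and avoid $\phi$. Using $|\mathring A_1|^2+|\mathring A_2|^2=6-2K+\frac83h^2$, the bound becomes
\[
\mathcal I\leq \sqrt3\bigl(3-K+\tfrac43h^2\bigr).
\]
Integrating gives
\[
2\pi^2(1+g)\leq \sqrt3\Bigl(3\mathcal A-4\pi(1-g)+\tfrac43\int h^2\Bigr),
\]
that is,
\[
\mathcal A+\frac49\int|H|^2\geq \frac{2\pi^2(1+g)}{3\sqrt3}+\frac{4\pi(1-g)}{3}.
\]
For $g=1$ this is \eqref{eq:strong-bound}. For $g\geq2$ the term $\frac{4\pi(1-g)}{3}$ is negative, so I need to check positivity of the margin. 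The increment of the right side per unit of genus is $\frac{2\pi^2}{3\sqrt3}-\frac{4\pi}{3}\approx3.80-4.19<0$, so this route fails for large $g$.

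Instead I would combine $\mathcal W$ with $\mathcal W^-$. Using \eqref{eq:gauss}, $\int(1+3C^2+|H|^2)=\mathcal A+\int h^2$ since $C=0$, and $\mathcal W=\frac12(\mathcal W^++\mathcal W^-)$ with $\mathcal W^+=\int|H|^2\geq0$. Then
\[
\mathcal W\geq \tfrac12\mathcal W^-\geq \frac{2\pi^2(1+g)}{3\sqrt3}+4\pi(g-1),
\]
which exceeds $\frac{4\pi^2}{3\sqrt3}$ for $g\geq2$ and is sharp for $g=1$, reproducing Theorem~\ref{thm:lagrangian}'s bound. This route is cleaner and handles both inequalities uniformly.

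For the equality cases, when $g\geq2$ the inequalities are strict, so equality forces $g=1$. Then equality in $\mathcal W\geq\frac12\mathcal W^-$ forces $H=0$, and equality in Theorem~\ref{thm:lagrangian} gives congruence to $T_{\CL}$. The main obstacle is verifying that $\widehat\Sigma$ is trivial and computing its Betti sum; the Euler class argument of \eqref{eq:hopf-topology} transfers verbatim. The only subtle point was the positivity margin for $\mathcal W$, which the $\frac12\mathcal W^-$ route resolves.
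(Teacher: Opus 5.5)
There is a genuine gap: an arithmetic slip makes the genus $g\geq2$ case appear to follow from Chern--Lashof, and with the correct constant it does not.

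\textbf{The slip.} Start from
\[
2\pi^2(1+g)\leq 2\sqrt3\Bigl(4\pi(1-g)+3\int|\phi|^2-\tfrac1{12}\int h^2\Bigr).
\]
Dividing by $2\sqrt3$ and multiplying by $\tfrac43$ turns the Gauss--Bonnet term $4\pi(1-g)$ into $\tfrac{16\pi}{3}(1-g)$, not $16\pi(1-g)$. With the correct constant, \eqref{eq-W-} gives
\[
\mathcal W^-(F)\geq \frac{4\pi^2(1+g)}{3\sqrt3}-\frac{8\pi(g-1)}{3}+\frac19\int_\Sigma|H|^2\,dA .
\]
This is precisely your later area bound doubled, with $\frac19\int|H|^2$ added to both sides. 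That is forced, since both come from the same pointwise inequality together with \eqref{eq:gauss}.

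\textbf{Why the Chern--Lashof route fails for $g\geq2$.} Your $\mathcal W^-$ route has exactly the defect you diagnosed for $\mathcal W$. Each extra handle raises the Chern--Lashof contribution by $\frac{4\pi^2}{3\sqrt3}\approx7.60$ but costs $\frac{8\pi}{3}\approx8.38$ through $\chi$, because $\pi<2\sqrt3$. Already at $g=2$ the right-hand side is about $14.4$, which is below $\frac{8\pi^2}{3\sqrt3}\approx15.2$. The inconsistency between your two computations (one growing in $g$, the other decreasing) should have been the warning sign. As a result, neither inequality is proved for $g\geq2$, and neither is the strictness your equality discussion relies on.

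\textbf{The missing idea.} For $g\geq2$ the paper abandons Chern--Lashof and uses non-embeddedness instead.
\begin{itemize}
\item Since $\int_\Sigma F^*\omega=0$ and $H_2(\CP^2;\mathbb Z)$ is torsion-free, $[F]=0$.
\item Via $J$, the normal bundle is isomorphic to $T\Sigma$, so the normal Euler number is $\pm\chi(\Sigma)\neq0$.
\item The self-intersection formula $[F]\cdot[F]=e(N)+2d$ then rules out an embedding (cf.\ \cite[Lemma~4.6]{Eschenburg1985}), so $\mu(F)\geq2$.
\item The Lagrangian case of Theorem~\ref{thm-MU} gives $\mathcal W^-(F)\geq8\pi>\frac{8\pi^2}{3\sqrt3}$.
\item Your observation $\mathcal W\geq\frac12\mathcal W^-$ gives $\mathcal W(F)\geq4\pi>\frac{4\pi^2}{3\sqrt3}$.
\end{itemize}
Both inequalities are strict, so equality forces $g=1$, where Theorem~\ref{thm:lagrangian} and your $g=1$ equality argument apply.

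The rest of your proposal is correct: triviality of $\widehat\Sigma$, the Betti sum $4+4g$, the bound $\int\mathcal I\geq2\pi^2(1+g)$, and the pointwise estimates. It is simply too weak to reach higher genus.
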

\begin{proof}It suffices to prove the inequality for $g \ge 2$. In this case, $F$ cannot be an embedding due to a topological obstruction in the normal bundle. In particular, the Lagrangian condition implies that its Euler number is non-zero, which implies that the self-intersection number does not vanish; See \cite[Lemma~4.6]{Eschenburg1985} for details. Therefore, the maximum multiplicity $\mu(F)\geq 2$. Then it follows from Theroem~\ref{thm-MU} that 
$$ \mathcal{W}^{-}(F)\geq4\pi\mu(F)\geq 8\pi> \frac{8\pi^2}{3\sqrt{3}} .~~~~~~ \mathcal{W}(F)\geq\frac{\mathcal{W}^-(F)}{2}
> \frac{4\pi^2}{3\sqrt{3}}. 
$$
This completes the proof. 
\end{proof}
\begin{rem}
   As pointed out by Marques and Neves in \cite{Marques-Neves2}, this lower bound for the functional $\mathcal{W}^-$ has an immediate consequence for the singularity theory of calibrated submanifolds. Via the Hopf fibration $\pi: \mathbb{S}^5 \to \mathbb{C}P^2$, the horizontal lift of a minimal Lagrangian surface yields a minimal Legendrian surface in $\mathbb{S}^5$. Taking the cone over this two-dimensional Legendrian link produces a three-dimensional special Lagrangian cone in $\mathbb{C}^3$. Because the functional $\mathcal{W}^-$ is intimately related to the area of this link in $\mathbb{S}^5$, it directly governs the density of the cone. Consequently, the sharp inequality in Theorem~\ref{thm-main} establishes that, among all special Lagrangian cones in $\mathbb{C}^3$ whose link is a $3$-fold covering over its Hopf projection image, the cone generated by the Clifford torus is the unique non-trivial one  attaining the minimal possible density at the origin. We refer to \cite{Haskins1,Haskins2} for more related discussions. 
   We also note that Luo, Ma and Yin in \cite{LMY} characterized the Clifford torus as the unique embedded minimal Lagrangian torus in $\mathbb{C}P^2$.
\end{rem}
\section{A disproof of Montiel-Urbano's conjecture in the non-Lagrangian setting}
\label{sec:counterexamples}
In this section, we show that there exist tori in $\mathbb{C}P^2$ whose $\mathcal{W}^-$ energy is less than that of the Clifford torus $T_{\CL}$. Hence, Montiel-Urbano's conjecture fails for non-Lagrangian tori in $\mathbb{C}P^2$.
Such examples are obtained by deforming $T_{\CL}$ along non-Killing Jacobi fields of $\mathcal{W}^-$. 

\subsection{The $\mathcal{W}^-$-stability of the Clifford torus among all tori in $\mathbb{C}P^2$}
In Remark~6.4 of \cite{WangXie}, C. P. Wang and the second author stated the second variational formula of $\mathcal{W}^-$ without proof, and asserted that $T_{\CL}$ is stable with respect to $\mathcal{W}^-$. To better understand the Killing fields involved, we provide a direct proof here.
 
\begin{theorem}
\label{thm:quadratic-stability}
The Clifford torus $T_{\CL}$ is stable for $\mathcal{W}^-$ among all smooth tori in $\CP^2$, i.e., for any given smooth normal vector field $V$, we have  
$$D^2\mathcal{W}^-_{T_{\rm Cl}}[V,V]\geq0.$$ 
Moreover, its nullity is $12$, and the space of non-Killing Jacobi fields has dimension $6$. 
\end{theorem}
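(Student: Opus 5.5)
Since $T_{\CL}$ is minimal and $\mathcal W^-=2\mathcal A+\int|H|^2$, I will first derive the second variation at a minimal surface. For a normal variation $F_t$ with $\partial_tF_t|_{t=0}=V$, the mean curvature satisfies $H_0=0$, so
\[
\frac{d^2}{dt^2}\Big|_{t=0}\int|H|^2\,\dd A=2\int|\partial_tH|^2\,\dd A .
\]
The standard first variation of the mean curvature gives $\partial_tH=-\tfrac12 LV$, where $L=\Delta^\perp+\mathcal R+\tilde A$ is the Jacobi operator of the area. Since $\frac{d^2}{dt^2}\mathcal A=-\int\langle LV,V\rangle$, we get
\[
D^2\mathcal W^-[V,V]=-2\int\langle LV,V\rangle+\tfrac12\int|LV|^2=\tfrac12\int\langle (L^2-4L)V,V\rangle=\tfrac12\int\langle (L-2)^2V,V\rangle-2\int|V|^2 .
\]
Up to sign conventions, which I will check carefully, stability is therefore equivalent to the spectral statement that no eigenvalue $\mu$ of $-L$ lies strictly between $-4$ and $0$. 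The nullity is the total multiplicity of the eigenvalues $\mu\in\{0,-4\}$, which is the analogue of the Weiner-type argument for the Willmore functional in $\mathbb S^n$.

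Next I will compute the spectrum of $L$ explicitly on $T_{\CL}$. The torus is flat and homogeneous, namely the orbit of the diagonal torus $T^2\subset SU(3)$, and it is Lagrangian, so $J$ identifies the normal bundle with $TT^2$. In the flat parallel frame $\{e_1,e_2\}$, with normal frame $\{Je_1,Je_2\}$, the cubic form $C_{ijk}$ has constant coefficients: $h=0$, and $(a,b)$ is constant with $|\phi|^2=\tfrac14$ from \eqref{eq:gauss} with $K=0$. Hence $L$ is a constant-coefficient operator, and on Fourier modes $e^{i\langle\gamma,\cdot\rangle}$ indexed by the dual lattice $\Lambda^*$ it reduces to a $2\times2$ Hermitian matrix. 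Concretely, $-L$ becomes $|\gamma|^2\Id$ minus a constant curvature term coming from $\mathcal R$ and $\tilde A$; the normal connection is flat and $\tilde A$ is computed from $A_1,A_2$ restricted to the horizontal block. I will write down these symbols, find the two eigenvalues as functions of $\gamma$, and identify $\Lambda^*$, which is a hexagonal lattice scaled so that $\mathcal A=4\pi^2/(3\sqrt3)$.

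Then I will check over the lattice that no eigenvalue lies in $(0,4)$, in the normalization of the paper, and count the multiplicities at the endpoints. I expect the count to go as follows. The value $0$ (minimal Jacobi fields) is attained by the $8$-dimensional Killing fields of $\mathfrak{su}(3)$, less the $2$ tangential ones, giving $6$. The value $4$ is attained on another shell of lattice vectors, giving $6$ more. The total nullity is then $12$. The kernel of $L^2-4L$ contains the projections of Killing fields of $\CP^2$, which span a $6$-dimensional space because the stabilizer of the flat torus acts tangentially, so the remaining $6$ directions are non-Killing Jacobi fields.

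The main obstacle is the lattice check. The eigenvalues on the first few shells of $\Lambda^*$ must be computed exactly, to verify that none lands in the forbidden interval and that the multiplicities are exactly as claimed. Beyond those shells, the eigenvalue growth $|\gamma|^2-\mathrm{const}$ makes the check trivial, so only finitely many shells need inspection. A secondary obstacle is getting the sign conventions and the normalization of $H=\tfrac12\tr\mathrm{II}$ right in the formula for $\partial_tH$, so that the forbidden interval is correctly located.
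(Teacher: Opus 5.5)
Your route is the paper's: at a minimal surface $D^2\mathcal W^-[V,V]=\tfrac12\int\langle (L^2-4L)V,V\rangle$, and the Lagrangian structure gives a parallel normal frame $\{Je_1,Je_2\}$, so everything reduces to the Laplace spectrum of the flat hexagonal torus. The problem is the nullity picture you anticipate. It is wrong, and your total $6+6=12$ comes out right only because two wrong multiplicities cancel.

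\textbf{The constant term.} On the diagonal, $\mathcal R$ contributes $K(e_i,Je_i)+K(e_j,Je_i)=4+1=5$, and it contributes $0$ off the diagonal. The term $\tilde A$ contributes $\sum_{k,l}C_{kli}C_{klj}=2|\phi|^2\delta_{ij}=\delta_{ij}$. Note that $|\phi|^2=\tfrac12$, not $\tfrac14$: \eqref{eq:gauss} with $K=h=0$ gives $1-2|\phi|^2=0$. So your $-L$ is $\Delta-6$ on each component, with $\Delta\ge 0$ the Laplace--Beltrami operator and no coupling. This is exactly the operator the paper uses.

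\textbf{The spectrum.} With $\Delta=-6(\partial_x^2+\partial_x\partial_y+\partial_y^2)$ the spectrum is $6(m^2+mn+n^2)\in\{0,6,18,24,\dots\}$. Your forbidden window $\mu\in(-4,0)$ is the Laplace window $(2,6)$, which is empty. So the ``lattice check'' is immediate rather than the main obstacle.

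\textbf{The gap.} The endpoint $\mu=-4$ (the value $4$ of your $L$, i.e.\ Laplace eigenvalue $2$) is never attained, so there is no second shell contributing $6$. The entire $12$-dimensional nullity is $\ker L$ itself. It consists of two copies of the $6$-dimensional eigenspace spanned by $\cos x,\sin x,\cos y,\sin y,\cos(x-y),\sin(x-y)$.

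In particular, your assertion that the area-Jacobi fields ($\mu=0$) are exactly the $6$ projected Killing fields is false. $T_{\CL}$ is already degenerate as a minimal surface, and its $6$ non-Killing $\mathcal W^-$-Jacobi fields are non-Killing area-Jacobi fields.

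\textbf{Correct final step.} The normal projections of $\mathfrak{su}(3)$ span a $6$-dimensional subspace of the $12$-dimensional $\ker L$, since the stabilizer of $T_{\CL}$ is $2$-dimensional. Hence the complement, consisting of the non-Killing Jacobi fields, has dimension $6$. This is also the structure used in Section~\ref{sec:counterexamples}, where the deformation directions satisfy $(\Delta-6)f=(\Delta-6)g=0$. If your explicit computation had produced a shell at Laplace eigenvalue $2$, that would have signalled an error, for instance the $|\phi|^2$ slip above.
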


\begin{proof}
Let $L$ denote the Jacobi operator for the area functional. Given a smooth normal variation vector field $V$, it follows from $H = 0$ that the second variation of $\mathcal{W}^-$ evaluated at $T_{\CL}$ is given by
\begin{equation}\label{eq-2nd}\begin{split} D^2\mathcal{W}^-_{T_{\CL}}[V,V] &= 2 D^2 \mathcal{A}_{T_{\CL}}(V,V) + 2 \int_{T_{\CL}} |\delta H|^2 \dd A\\
&= 2 \int_{T_{\CL}} \langle V, LV \rangle \dd A + \frac{1}{2} \int_{T_{\CL}} \langle LV, LV \rangle \dd A. \\
\end{split}
\end{equation}
where we have used the standard fact that $\delta H = -\frac{1}{2}LV$. 

On the Clifford torus $T_{\CL}$, it is well known that 
$L = \Delta^\perp - 6$, where $\Delta^\perp$ is the rough Laplacian in the normal bundle. 
Since $T_{\CL}$ is a flat Lagrangian torus, 
its normal bundle $NT_{\CL}$ is also flat. Moreover, there exists a global parallel orthonormal frame $\{n_1, n_2\}$ for $NT_{\CL}$, satisfying $\nabla^\perp n_1 = \nabla^\perp n_2 = 0$. Using this frame, any normal vector field $V$ can be globally decomposed into scalar functions as $V = u n_1 + v n_2$, where $u, v \in C^\infty(T_{\CL})$. Then we have, 
$$LV = (\Delta u - 6u)n_1 + (\Delta v - 6v)n_2,$$
where  $\Delta$ is the Laplace-Beltrami operator on $T_{\CL}$. Substituting this into \eqref{eq-2nd} yields 
\begin{equation}
\begin{aligned}
D^2\mathcal{W}^-_{T_{\CL}}[V,V] = \frac{1}{2} \int_{T_{\CL}}  (\Delta u - 2u)(\Delta u- 6u) \,\dd A+\frac{1}{2} \int_{T_{\CL}}  (\Delta v- 2v)(\Delta v - 6v) \, \dd A.
\end{aligned}
\end{equation}
Therefore, the functional $\mathcal W^-$ is stable if and only if the Laplace spectrum of $T_{\CL}$ has no intersection with $(2, 6)$. 

The spectrum of the Laplacian of the Clifford torus $T_{\mathrm{\CL}}$ in $\mathbb{C}P^2$ is explicitly known. Here we refer to proofs of \cite[Theorem~6.3]{WangXie}. In particular, the first nonzero eigenvalue is exactly $\lambda_1 = 6$, with multiplicity $6$. We conclude that $D^2\mathcal{W}^-_{T_{\mathrm{\CL}}}[V,V] \geq 0$ for all $V$, and  $\dim ker(D^2\mathcal{W}^-_{T_{\mathrm{\CL}}})=12$. Finally, note that the dimension of isometric (and also conformal) transformations of $\mathbb{C}P^2$ that do not preserve the Clifford torus is exactly $6$. Hence, the space generated by non-Killing Jacobi fields has dimension $6$.
\end{proof}

\subsection{A $3$-parameter family of deformations of $T_{\CL}$ decreasing $W^-$}
To construct the required deformation using non-Killing Jacobi fields, we parameterize the Clifford torus $T_{\CL}$ as 
$$
[z_1:z_2:z_3]
=
[\frac 1{\sqrt{3}}e^{ix}:\frac 1{\sqrt{3}}e^{iy}:\frac 1{\sqrt{3}}],~~(x,y)\in (\R/2\pi\mathbb Z)^2.
$$ 
In terms of this parameterization, it is easy to verify that the Laplacian is
\[
\Delta
=
-6(\partial_x^2+\partial_x\partial_y+\partial_y^2).
\]

We consider the following three eigenfunctions of $T_{\CL}$ corresponding to the first eigenfunction $\lambda_1=6$, 
$$
\phi_1=\cos x,
\quad
\phi_2=\cos y,
\quad
\phi_3=\cos(x-y).
$$
For parameters $\alpha,\beta,\gamma\in\mathbb R$, 
\begin{equation}
f_{\alpha,\beta,\gamma}
\triangleq
\alpha\cos x+\beta\cos y+\gamma\cos(x-y),\qquad
 g_{\alpha,\beta,\gamma}
\triangleq
-2\alpha\cos x
-\frac{\beta}{2}\cos y
+\gamma\cos(x-y).
\end{equation}
It follows that 
\begin{equation}(\Delta-6)f_{\alpha,\beta,\gamma}=0,\qquad (\Delta-6)g_{\alpha,\beta,\gamma}=0,\qquad
6f_x+3f_y+3g_x+6g_y=0.
   \label{eq:null-family}
\end{equation}
For sufficiently small $|t|$, define
\begin{equation}
F_t^{\alpha,\beta,\gamma}(x,y)
\triangleq 
[
\sqrt{p_1}e^{ix}:
\sqrt{p_2}e^{iy}:
\sqrt{p_3}
],~~~(x,y)\in (\R/2\pi\mathbb Z)^2,
\label{eq:general-family}
\end{equation}
with 
$$
p_1=\frac13+t f_{\alpha,\beta,\gamma},
\quad
p_2=\frac13+t g_{\alpha,\beta,\gamma},
\quad
p_3=\frac13-t(f_{\alpha,\beta,\gamma}
                 +g_{\alpha,\beta,\gamma}).
$$

\begin{theorem} 
\label{thm:cubic-instability}
For every fixed $(\alpha,\beta,\gamma)\in\mathbb R^3$, the torus defined in the family
\eqref{eq:general-family} satisfies
\begin{align}
\mathcal{A}(F_t^{\alpha,\beta,\gamma})
&=
\left(
1+
\frac{81}{4}\alpha\beta\gamma\,t^3
\right)\mathcal{A}(T_{\CL})
+
O(t^4),
\label{eq:area-general-cubic}\\
\mathcal{W}^-(F_t^{\alpha,\beta,\gamma})
&=
\left(
1+
\frac{81}{4}\alpha\beta\gamma\,t^3
\right)\mathcal{W}^-(T_{\CL})
+
O(t^4).
\label{eq:Wminus-general-cubic}
\end{align}
Hence the Clifford torus is not a local minimizer of $\mathcal{W}^-$ among all smooth tori in $\mathbb{C}P^2$.
\end{theorem}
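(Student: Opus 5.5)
The plan is to compute the induced metric and the mean curvature of $F_t=F_t^{\alpha,\beta,\gamma}$ explicitly in the moment-map coordinates $(p_1,p_2,p_3)$, and then expand both $\mathcal{A}$ and $\int|H|^2$ in $t$. The family \eqref{eq:general-family} is a graph over the Clifford torus in toric coordinates. On the open torus orbit, the Fubini--Study metric in coordinates $(p_1,p_2,\theta_1,\theta_2)$, with $z_j=\sqrt{p_j}e^{i\theta_j}$ and $p_3=1-p_1-p_2$, is
\[
g_{\mathrm{FS}}=\tfrac14\sum_{j=1}^3\frac{dp_j^2}{p_j}+\sum_{j=1}^3 p_j\,d\theta_j^2-\Bigl(\sum_{j=1}^3 p_j\,d\theta_j\Bigr)^2,\qquad \theta_3=0 .
\]
Since $p_1,p_2$ depend on $(x,y)$ and $\theta_1=x$, $\theta_2=y$, the pull-back metric $g_t$ is explicit. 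Hence
\[
\mathcal{A}(F_t)=\int\sqrt{\det g_t}\,dx\,dy
\]
is an explicit integral of a function of $t f$, $t g$ and their first derivatives. I will expand it to order $t^3$.

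The expansion proceeds order by order as follows.
\begin{itemize}
\item \textbf{Order $t^1$.} The coefficient vanishes because $T_{\CL}$ is minimal; concretely, $f$ and $g$ have zero mean.
\item \textbf{Order $t^2$.} The coefficient is $\tfrac12 D^2\mathcal{A}[V,V]$ for the normal field $V$ induced by $(f,g)$. The variation $p\mapsto p+t(f,g,-f-g)$ moves points along $J$ applied to the tangent directions $\partial_{\theta_j}$ (the gradient of the moment map), so $V$ is a normal field whose components are fixed linear combinations of $f$ and $g$, and these solve $(\Delta-6)=0$ by \eqref{eq:null-family}. Since $L=\Delta^\perp-6$ in the parallel frame $\{n_1,n_2\}$, the quadratic term vanishes.
\item \textbf{Tangential component.} Part of the variation field can be tangential, and that part must be checked. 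I expect the third relation $6f_x+3f_y+3g_x+6g_y=0$ in \eqref{eq:null-family} to be exactly the condition ensuring that the variation is normal to first order, or equivalently that $\delta H=0$. I will verify this by computing $\delta H=-\tfrac12 LV$ directly in these coordinates.
\item \textbf{Order $t^3$.} The cubic term in the area comes from cubic monomials such as $f^3$, $f^2g$, $f\,f_x^2$, and so on, integrated over the torus. Products of three cosines of $x$, $y$, $x-y$ integrate to a nonzero value only via the resonance $\cos x\cos y\cos(x-y)$, whose mean is $\tfrac14$. Squares times a single factor, such as $\cos^2 x\cos y$, integrate to zero. The cubic coefficient is therefore proportional to $\alpha\beta\gamma$, and a bookkeeping computation should give $\tfrac{81}{4}\alpha\beta\gamma\,\mathcal{A}(T_{\CL})$ with $\mathcal{A}(T_{\CL})=\tfrac{4\pi^2}{3\sqrt3}$.
\end{itemize}

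For $\mathcal{W}^-=2\mathcal{A}+\int|H|^2$, note that $H_t=t\,\delta H+O(t^2)$. By \eqref{eq:null-family}, $\delta H=-\tfrac12 LV=0$, so $H_t=O(t^2)$ and $\int|H_t|^2\,dA=O(t^4)$. Hence
\[
\mathcal{W}^-(F_t)=2\mathcal{A}(F_t)+O(t^4),
\]
and \eqref{eq:Wminus-general-cubic} follows from \eqref{eq:area-general-cubic} together with $\mathcal{W}^-(T_{\CL})=2\mathcal{A}(T_{\CL})$. Choosing $\alpha\beta\gamma\neq0$ and $t$ of sign opposite to $\alpha\beta\gamma$ makes $\mathcal{W}^-(F_t)<\mathcal{W}^-(T_{\CL})$ for small $|t|$. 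Since $F_t\to T_{\CL}$ smoothly, this shows that the Clifford torus is not a local minimizer.

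The main obstacle is the $t^3$ coefficient of the area. It requires expanding $\sqrt{\det g_t}$, which involves $1/p_j$ factors and the quadratic forms in $dp_j$ and $d\theta_j$, to third order, and then tracking every resonant product. To keep this manageable I will first simplify with symmetry: write $\det g_t$ as $\det g_0\cdot(1+X)$ with $X=O(t)$, and use $\sqrt{1+X}=1+\tfrac X2-\tfrac{X^2}{8}+\tfrac{X^3}{16}$. I will also use the fact that the trigonometric averages of all non-resonant cubic terms vanish. Finally, I will check the constant numerically on the special case $\alpha=\beta=\gamma=1$, as a guard against arithmetic slips.
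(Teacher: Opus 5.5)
Your framework matches the paper's: toric chart, expand the area density of $F_t$ to order $t^3$, kill the $t^1$ and $t^2$ terms by minimality and the Jacobi equation, and use $\delta H=0$ to get $\int|H_t|^2\,dA_t=O(t^4)$. Those parts are correct. The gap is the step that carries the theorem, the $t^3$ coefficient, which you only assert (``a bookkeeping computation should give $\tfrac{81}{4}\alpha\beta\gamma$''). Your resonance argument shows only that the cubic coefficient equals $c\,\alpha\beta\gamma$ for some universal constant $c$. It does not show $c\neq0$, and that is exactly what the instability conclusion needs.

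Your plan also misreads the third relation in \eqref{eq:null-family}; it has nothing to do with normality or $\delta H$:
the field $f\partial_{p_1}+g\partial_{p_2}$ is exactly normal, because $g_{\mathrm{FS}}$ has no $\mathrm dp\,\mathrm dq$ cross terms (as you yourself note, it is $J$ of tangent directions);
$\delta H=-\tfrac12LV=0$ already follows from $(\Delta-6)f=(\Delta-6)g=0$;
and at a minimal surface a tangential part could not affect $\delta H$ anyway.
So your ``tangential component'' step is vacuous. The relation $\tr(B_0M)=6f_x+3f_y+3g_x+6g_y=0$, with $B_0=G_0^{-1}$, $M=Dv$ and $v=(f,g)^T$, is instead what enters at order $t^3$. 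Geometrically it says that $f\,dx+g\,dy$ (which is $\iota_V\omega$ up to a constant) is co-closed. Hence $V$ is $L^2$-orthogonal to the Hamiltonian, in particular the Killing, Jacobi fields.

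Here is how the paper closes the cubic step. The induced metric is exactly $h_t=G(p)+\tfrac{t^2}{4}M^TG(p)^{-1}M$, so $h_t=G_0+tL+t^2\bigl(-vv^T+\tfrac14M^TB_0M\bigr)-\tfrac{t^3}{4}M^TB_0LB_0M+O(t^4)$. A $2\times2$ determinant expansion then gives the cubic density coefficient $a_3=\tfrac12v^TNv-\tfrac18\bigl[\tr(B_0M^TNM)+\tr(NM^TB_0M)\bigr]$, where $N=B_0LB_0$. Put $R=B_0^{1/2}MB_0^{1/2}$ and $S=B_0^{1/2}LB_0^{1/2}$; the bracket becomes $\tr\bigl(S(RR^T+R^TR)\bigr)$. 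The third relation makes $R$ trace-free, so $RR^T+R^TR=\tr(R^TR)\Id$, and $\tr S=\tr(B_0L)=0$ then kills the bracket pointwise. Thus every derivative term cancels and $a_3=-\tfrac{27}{2}fg(f+g)$.

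Only at this point is your ``cosines only'' resonance count legitimate. A brute-force expansion produces resonant products involving sines, such as $\cos x\sin y\sin(x-y)$ (mean $-\tfrac14$). These do not vanish individually; they cancel only because of the third relation. Finally, the coefficient of $\cos x\cos y\cos(x-y)$ in $fg(f+g)$ is $-6\alpha\beta\gamma$. Hence the average of $a_3$ is $\tfrac{81}{4}\alpha\beta\gamma$, which is nonzero whenever $\alpha\beta\gamma\neq0$.
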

\begin{proof}
In the open toric chart parameterized by  
$[\sqrt{p_1}e^{ix}:\sqrt{p_2}e^{iy}:\sqrt{p_3}]$ with $p_3=1-p_1-p_2>0$, the Fubini-Study metric of $\mathbb{C}P^2$ takes the following form, 
\[
 g_{\mathrm{FS}}=\sum_{j=1}^3\frac{\mathrm dp_j^2}{4p_j}
       +p_1\mathrm dx^2+p_2\mathrm dy^2
       -(p_1\mathrm dx+p_2\mathrm dy)^2.
\]
Write $q=(x,y)^T$ and $p=(p_1,p_2)^T$. Then 
\begin{equation}\label{eq:metric}
 g_{\mathrm{FS}}=\frac14\mathrm dp^TG(p)^{-1}\mathrm dp
                       +\mathrm dq^TG(p)\mathrm dq,
 \qquad
 G(p)=\begin{pmatrix}p_1-p_1^2&-p_1p_2\\-p_1p_2&p_2-p_2^2\end{pmatrix}.
\end{equation}

For simplicity, we write $f=f_{\alpha,\beta,\gamma}$ and $g=g_{\alpha,\beta,\gamma}$. Set
\[v\triangleq (f,g)^T\quad \hbox{ and
}~~M\triangleq Dv=\left(\begin{matrix}f_x&f_y\\g_x&g_y\end{matrix}\right).\] Then 
we have $\mathrm dp=tM\mathrm dq$.
Therefore, the induced metric on $F_t^{\alpha,\beta,\gamma}$ is exactly $$h_t = G(p(t)) + \frac{t^2}{4} M^T G(p(t))^{-1} M.$$

Let $U=\max(\|f\|_\infty,\|g\|_\infty,\|f+g\|_\infty)$.
If $U=0$, the family is constant and all the asserted expansions are
immediate. Otherwise, for $|t|\le 1/(6U)$ we have $p_j\ge1/6$. 
Each $F_t$ is a graph over $(\R/2\pi\mathbb Z)^2$. 
It is injective and its differential has rank two, hence it is an
embedding by compactness. 

According to the parameter variations $p_1 = 1/3 + tf$ and $p_2 = 1/3 + tg$, we define the matrix $L$ as
\begin{equation}
    L \triangleq \frac{1}{3} \begin{pmatrix} f & -(f+g) \\ -(f+g) & g \end{pmatrix}.
\end{equation}
Then
\[
G(p(t))
=
G_0+tL-t^2vv^T,
\quad
G(p(t))^{-1}
=
B_0-tB_0LB_0+O(t^2),
\]
where \[
G_0
=
\frac19
\begin{pmatrix}
2&-1\\
-1&2
\end{pmatrix}, \qquad
B_0=G_0^{-1}
=
\begin{pmatrix}
6&3\\
3&6
\end{pmatrix}.
\]
Thus
\[
h_t
=
G_0+th_1+t^2h_2+t^3h_3+O(t^4),
\]
with 
$$
h_1=L,~~~ h_2=-vv^T+\frac14M^TB_0M,~~~h_3=-\frac14M^TB_0LB_0M.$$ 
To calculate the Taylor expansion of $\sqrt{\det(h_t)}$, we use the following claim. 

{\bf Claim.} {\em For arbitrary $2\times2$ matrices
$C_1,C_2,C_3$ with $\tr C_1=0$, there holds}
\begin{equation}\label{eq-determinant}\sqrt{\det(\Id+tC_1+t^2C_2+t^3C_3+O(t^4))}\notag=1+\frac{t^2}{2}(\tr C_2+\det C_1)
       +\frac{t^3}{2}\bigl(\tr C_3-\tr(C_1C_2)\bigr)+O(t^4).
\end{equation}
This claim follows from 
$$\det(\Id+tC_1+t^2C_2+t^3C_3+O(t^4)) = 1 + t^2(\text{tr} C_2 + \det C_1) + t^3(\text{tr} C_3 - \text{tr}(C_1C_2)) + O(t^4),$$
where the following identities for $2\times2$ matrices are used,   
\[\begin{split}
   \det(\Id+X)=&1+\tr X+\det X, \\
   \det(A+B)=&\det A + \det B + \text{tr}A\,\text{tr}B - \text{tr}(AB).
\end{split}\]
Notice that $\text{tr}(B_0L)=0$. Applying the above claim to $C_j=B_0h_j$, we obtain
\begin{equation}\label{eq:density}
 \mathrm dA_t=(1+t^2a_2+t^3a_3+O(t^4))\mathrm dA_0,
\end{equation}
where, by writing 
$N=B_0LB_0$,
\begin{align}
 a_2&=-\frac34v^TB_0v+\frac18\tr(B_0M^TB_0M),\label{eq:a2}\\
 a_3&=\frac12v^TNv-\frac18\bigl\{
       \tr(B_0M^TNM)+\tr(NM^TB_0M)\bigr\}.\label{eq:a3}
\end{align}
Here the quadratic calculation uses
$v^TB_0v=6(f^2+fg+g^2)$ and $\det(B_0L)=-3(f^2+fg+g^2)$; the cubic calculation follows by
substituting $h_2,h_3$ into the coefficient in
\eqref{eq-determinant}. Note that we also have
\begin{equation}
    \label{eq-a2}
    \int_{T^2} a_2\,\dd A=-\frac{3}{4}\int_{T^2} v^T B_0 v\,\dd A+\frac{1}{8}\int_{T^2}v^T B_0 \Delta v \,\dd A=\frac{1}{8}\int_{T^2}v^T B_0 (\Delta v-6v)\,\dd A=0. 
\end{equation}
It follows from \eqref{eq:null-family} that 
\begin{equation}\label{eq:traceM}
 \tr(B_0M)=6f_x+3f_y+3g_x+6g_y=0.
\end{equation}
Set
\[
 R=B_0^{1/2}MB_0^{1/2},\qquad
 S=B_0^{1/2}LB_0^{1/2}.
\]
We have $\tr R=0$ by \eqref{eq:traceM} and $\tr S=0$ by
$\tr(B_0L)=0$. Observe that every real trace-free $2\times2$ matrix satisfies
\begin{equation}\label{eq:matrix-identity}
 RR^T+R^TR=\tr(R^TR)\Id.
\end{equation}
Consequently, cyclic
invariance of trace gives
\begin{align*}
&\tr(B_0M^TNM)+\tr(NM^TB_0M)=\tr(R^TSR)+\tr(SR^TR)
        =\tr\bigl(S(RR^T+R^TR)\bigr)=0.
\end{align*}
Finally, direct multiplication gives
\[
 N=-9\begin{pmatrix}g&f+g\\f+g&f\end{pmatrix},\qquad
 v^TNv=-27fg(f+g).
\]
Thus \eqref{eq:a3} reduces pointwise to
\begin{equation}\label{eq:a3-simple}
 a_3=-\frac{27}{2}fg(f+g).
\end{equation}
Hence, 
\begin{equation}
\frac1{(2\pi)^2}
\int_0^{2\pi}\int_0^{2\pi}a_3(x,y)\,dx\,dy
=
\frac{81}{4}\alpha\beta\gamma.
\label{eq:a3-average-general}
\end{equation}

Since \eqref{eq:null-family} implies that the full area Hessian
vanishes against every normal field,
\[
D^2A_0[v,w]=0
\qquad
\text{for all }w,
\]
the first variation of the mean curvature also vanishes: 
$\left.\nabla_tH_{F_t}\right|_{t=0}=0.
$
Therefore
$
H_{F_t}=O(t^2)
$
uniformly and
\begin{equation}
E_H(F_t)
=
\int |H_{F_t}|^2\,dA_t
=
O(t^4).
\label{eq:EH-order4-general}
\end{equation}
Equations \eqref{eq-a2}, \eqref{eq:a3-average-general} and
\eqref{eq:EH-order4-general} give
\eqref{eq:area-general-cubic} and
\eqref{eq:Wminus-general-cubic}. 
\end{proof}

\section{Two open problems}\label{sec-5}
In the last section, we showed that the Clifford torus fails to minimize $\mathcal{W}^-$ among all smooth tori. This motivates the following problem.   
\begin{problem}
\label{prob:Wminus-minimizer}  For the Willmore-type functional $\mathcal{W}^-$, define
\[
\beta^-
=
\inf\left\{
\mathcal{W}^-(F):F:T^2\to\mathbb{C}P^2\text{ is an embedding}
\right\}.
\]
Determine the value of $\beta^-$ and whether it is attained by a smooth embedding. If so, describe the geometry of minimizer. In particular, determine whether such a torus lies in a bifurcation branch generated from the non-Killing Jacobi fields of the Clifford torus.
\end{problem}
We refer to \cite{BK,Kuwert,KS, Marques-Neves, Minicozzi,Riviere,Simon} for some related works which could be useful for this problem.\vspace{3mm}

In \cite{WangXie}, C. P. Wang and the second author established the strict stability of the Clifford torus for the whole Willmore functional $\mathcal{W}$ along all smooth variations. It is natural to adapt Montiel-Urbano's conjecture for $\mathcal{W}^-$ among all smooth tori to the Willmore  functional $\mathcal{W}$.
\begin{conjecture}
\label{conj:ordinary-W-global}
The Clifford torus $T_{\CL}$ achieves the minimum of the Willmore functional $\mathcal{W}$ amongst all smooth tori $\mathbb{C}P^2$. 
\end{conjecture}

\vskip 0.5cm
\textbf{Acknowledgement}
This work is supported by NSFC Nos. 12671061, 12371052 and 12171473. The second author is also partially supported by the Fundamental Research Funds for Central Universities. 
\vskip 0.3cm
\textbf{AI Disclosure.} ChatGPT assistance was used extensively in this work. We first used AI to verify the conjecture for special families of tori, and then guided it to find the right way to approach the conjecture in the Lagrangian setting, and search for counterexamples in the non-Lagrangian setting. The authors directed the investigation, examined gaps in the arguments, and decided which approaches to retain or revise. The final proof was substantially rewritten and reinterpreted by the authors. Two open problems are also proposed by the authors. AI tools were also used for language polishing. The authors take full responsibility for the mathematical content and conclusions. 

\end{document}